\numberwithin{equation}{section}
\newtheorem{definition}{Definition}[section]
\newtheorem{lemma}[definition]{Lemma}
\newtheorem{theorem}[definition]{Theorem}
\newtheorem{proposition}[definition]{Proposition}
\newtheorem{corollary}[definition]{Corollary}
\newtheorem{remarkth}[definition]{Remark}
\newtheorem{ex}[definition]{Example}
\newenvironment{remark}
{\begin{remarkth}\upshape}{\hfill$\diamond$\end{remarkth}}
\newenvironment{example}
{\begin{ex}\upshape }{\hfill $\triangleleft$\end{ex}}
\renewcommand{\emph}[1]{{\bfseries\itshape{#1}}}
\newcommand{\R}{\mathbb{R}}      
\newcommand{\ds}{\displaystyle}
\begin{document}

\title[Lagrangian submanifolds in $k$-symplectic settings]{Lagrangian submanifolds in $k$-symplectic settings}

\author[M. de Le\'on]{M. de Le\'on}
\address{M. de Le\'on:
Instituto de Ciencias Matem\'aticas (CSIC-UAM-UC3M-UCM),
C/ Nicol\'as Cabrera 13,15, 28049 Madrid, Spain} \email{mdeleon@icmat.es}

\author[S. Vilari\~no]{S. Vilari\~no}
\address{S. Vilari\~no:
Centro Universitario de la Defensa-IUMA, Academia General Militar, Carretera de Huesca, s/n, 50090-Zaragoza, Spain} \email{silviavf@unizar.es}

\today{ }

\keywords{Lagrangian submanifolds, $k$-symplectic geometry, classical field theory.}

 \subjclass[2010]{53C15, 53D12, 57R50, 58A10}

\begin{abstract}
    In this paper we extend the well-know normal form theorem for Lagrangian submanifolds proved by A. Weinstein in symplectic geometry to the setting of $k$-symplectic manifolds.

\end{abstract}

\thanks{This work has been partially supported by MICINN (Spain) MTM2010- 21186-C02-01, MTM2010-12116-E and MTM2011-2585, the European project IRSES-project ``Geomech-246981'' and the ICMAT Severo Ochoa project SEV-2011-0087.}

 \maketitle

\tableofcontents

\section{Introduction}

    In symplectic geometry, the classical Darboux theorem states that if $M$ is a symplectic manifold of dimension $2n$ with a symplectic form $\omega$ and $x$ is any point of $M$, then there exists a coordinate system $(x^i, y^i)$ on a neighborhood $U$ of $x$ such that $\omega=dx^i\wedge dy^i$ on $U$.

    In \cite{W_1971} A. Weinstein has generalized this result in the following sense:
        \begin{theorem}
            Let $N$ be a closed submanifold of $M$ and $\Omega_0$, $\Omega_1$  symplectic structures on $M$ such that $\Omega_0\vert_N=\Omega_1\vert_N$. Then there exists a automorphism $f\vert_N$ on $M$ such that $f=id_N$ and $f^*\Omega_1=\Omega_0$.
        \end{theorem}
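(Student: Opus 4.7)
The plan is to prove the result by Moser's deformation (path) method. I would begin by forming the linear interpolation $\Omega_t := (1-t)\Omega_0 + t\,\Omega_1$ for $t\in[0,1]$, which is a smooth family of closed $2$-forms on $M$. Since $\Omega_0|_N = \Omega_1|_N$ and this common restriction is nondegenerate, we have $\Omega_t|_N = \Omega_0|_N$ for every $t$, so nondegeneracy at points of $N$ is automatic. By an openness/compactness argument one can shrink $M$ to an open neighborhood $U$ of $N$ on which $\Omega_t$ is symplectic for all $t\in[0,1]$ simultaneously.

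Next, I would produce a primitive for the exact $2$-form $\Omega_1-\Omega_0$ that vanishes along $N$. Choosing a tubular neighborhood of $N$ together with its retraction $r\colon U\to N$ and using the homotopy operator provided by the fibers of $r$ (the relative Poincaré lemma), one obtains a $1$-form $\alpha$ on $U$ with
\[
\Omega_1-\Omega_0 = d\alpha, \qquad \alpha|_N = 0.
\]
This is the crucial step: ordinary exactness is not enough, because we must force $\alpha$ to vanish on $N$ in order to keep $N$ fixed under the flow produced below.

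Then I would define a time-dependent vector field $X_t$ on $U$ by the Moser equation
\[
i_{X_t}\Omega_t = -\alpha,
\]
which has a unique smooth solution because each $\Omega_t$ is nondegenerate on $U$. Since $\alpha|_N=0$, also $X_t|_N=0$, so after possibly shrinking $U$ the time-dependent flow $f_t$ of $X_t$ is defined for all $t\in[0,1]$ and satisfies $f_t|_N=\mathrm{id}_N$. Using Cartan's magic formula and $d\Omega_t=0$, I would compute
\[
\frac{d}{dt}\bigl(f_t^*\Omega_t\bigr) = f_t^*\!\left(\mathcal{L}_{X_t}\Omega_t + \tfrac{d}{dt}\Omega_t\right) = f_t^*\bigl(d\,i_{X_t}\Omega_t + (\Omega_1-\Omega_0)\bigr) = f_t^*(-d\alpha + d\alpha) = 0.
\]
Therefore $f_1^*\Omega_1 = f_0^*\Omega_0 = \Omega_0$, and $f:=f_1$ is the desired diffeomorphism fixing $N$ pointwise.

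The main obstacle is the relative Poincaré lemma producing $\alpha$ with $\alpha|_N=0$; all the rest is a routine application of Cartan calculus. A secondary technical point is that the resulting $f$ is only defined on a neighborhood of $N$, not necessarily on all of $M$, so the statement should be understood in the germ sense near $N$.
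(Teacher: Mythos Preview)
Your proposal is correct and is precisely the classical Moser path argument. Note, however, that the paper does not supply its own proof of this statement: the theorem is quoted in the Introduction as a result of Weinstein \cite{W_1971} and is used only as motivation. The closest thing to a ``paper's proof'' is the proof of Theorem~\ref{normal_form}, the $k$-symplectic normal form, which follows exactly the same strategy you outline: the relative Poincar\'e lemma (Lemma~\ref{Poincare}) to obtain primitives vanishing along the submanifold, the linear interpolation $\Omega_{r,s}=\Omega^\mathcal{L}_r+s(\Omega_r-\Omega^\mathcal{L}_r)$, the Moser vector field defined by $\imath_{X_s}\Omega_{r,s}=\Theta_r-\Theta^\mathcal{L}_r$, and the flow computation $\frac{d}{ds}(\psi_s^*\Omega_{r,s})=0$. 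Your remark that the resulting diffeomorphism is only defined on a neighborhood of $N$ (so the statement should be read in the germ sense) is also in line with how the paper uses the result.
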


    The above theorem is very useful in order to understand the geometrical properties of lagrangian submanifolds. Indeed,
    A. Weinstein has obtained the following result, which gives the normal form for a lagrangian submanifold  in a symplectic manifold.
        \begin{theorem}
            Let $(\mathcal{P},\omega)$ be a symplectic manifold and let $\mathcal{L}$ be a lagrangian submanifold. Then there exists a tubular neighborhood $U$ of $\mathcal{L}$ in $\mathcal{P}$, and a diffeomorphism $\phi\colon U\to V=\phi(U)\subset T^*\mathcal{L}$ into an open neighborhood $V$ of the zero cross-section in $T^*\mathcal{L}$ such that $\phi^*(\omega_\mathcal{L}\vert_{V})=\omega\vert_{U}$, where $\omega_\mathcal{L}$ is the canonical symplectic form on $T^*\mathcal{L}$.
        \end{theorem}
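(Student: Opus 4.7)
The plan is to reduce this statement to the normal form theorem for symplectic structures recorded just above. Three stages are required: (i) construct a diffeomorphism $\psi$ from a neighborhood of the zero section in $T^*\mathcal{L}$ to a tubular neighborhood of $\mathcal{L}$ in $\mathcal{P}$; (ii) verify that $\psi^*\omega$ and $\omega_\mathcal{L}$ agree at every point of $\mathcal{L}$; (iii) apply the previous theorem to straighten out the residual discrepancy away from $\mathcal{L}$.

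For stage (i), I would select a Lagrangian subbundle $E \subset T\mathcal{P}|_\mathcal{L}$ complementary to $T\mathcal{L}$; such a Lagrangian complement always exists, for example by introducing a compatible almost complex structure. Because $\mathcal{L}$ is Lagrangian, the bundle map $v \mapsto \iota_v\omega|_{T\mathcal{L}}$ restricts to an isomorphism $E \xrightarrow{\sim} T^*\mathcal{L}$, so the normal bundle $\nu(\mathcal{L})$ is identified with $T^*\mathcal{L}$. Composing this identification with the exponential map of an auxiliary Riemannian metric yields the desired $\psi : V_0 \to U_0$ between a neighborhood $V_0$ of the zero section in $T^*\mathcal{L}$ and a tubular neighborhood $U_0$ of $\mathcal{L}$ in $\mathcal{P}$, which restricts to the identity on $\mathcal{L}$.

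For stage (ii), at a point $p \in \mathcal{L}$ there are splittings $T_p(T^*\mathcal{L}) = T_p\mathcal{L} \oplus T_p^*\mathcal{L}$ and $T_p\mathcal{P} = T_p\mathcal{L} \oplus E_p$, and $d\psi_p$ acts as the identity on the first factor and as the musical isomorphism $\alpha \mapsto \alpha^\sharp$ on the second. Expanding $\omega\bigl((v_1,\alpha_1^\sharp),(v_2,\alpha_2^\sharp)\bigr)$ into its four bilinear pieces, the terms in which both arguments lie in $T_p\mathcal{L}$, or both lie in $E_p$, vanish because those subspaces are Lagrangian, leaving
\[
(\psi^*\omega)_p\bigl((v_1,\alpha_1),(v_2,\alpha_2)\bigr) = \alpha_1(v_2) - \alpha_2(v_1) = (\omega_\mathcal{L})_p\bigl((v_1,\alpha_1),(v_2,\alpha_2)\bigr).
\]

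For stage (iii), the hypotheses of the Weinstein isotopy theorem stated above are now met on $V_0$ by the two symplectic structures $\psi^*\omega$ and $\omega_\mathcal{L}$ together with the closed submanifold $\mathcal{L}$. It supplies a diffeomorphism $g$ of a neighborhood $V \subset V_0$ of $\mathcal{L}$ satisfying $g|_\mathcal{L} = \mathrm{id}$ and $g^*\omega_\mathcal{L} = \psi^*\omega$. Setting $\phi = g \circ \psi^{-1}$ on $U = \psi(V)$ gives $\phi^*\omega_\mathcal{L} = \omega$ as required. The principal subtlety of the argument is stage (ii): the complement $E$ must be chosen genuinely \emph{Lagrangian} rather than merely transverse, so that the cross-terms in the expansion of $\omega$ drop out and the pointwise equality with $\omega_\mathcal{L}$ along $\mathcal{L}$ actually holds. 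Once this is arranged, the heavy lifting is performed by the Weinstein isotopy theorem.
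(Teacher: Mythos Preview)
The paper does not prove this statement; it is quoted (both in the Introduction and again in Section~\ref{normal_form_section}) as Weinstein's classical result from \cite{W_1971}, serving as the model for the paper's own $k$-symplectic normal form theorem. Your argument is the standard correct proof of Weinstein's theorem and there is nothing to compare it against here.

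That said, it is worth observing that your three stages mirror precisely the structure of the paper's proof of the $k$-symplectic generalization (Theorem~\ref{normal_form}): there the Lagrangian complement $E$ is supplied by the polarization $W$ rather than produced via a compatible almost complex structure; the bundle isomorphism $W\to T^*\mathcal{L}$ is written $w\mapsto -\imath_w\omega$; and the final deformation is carried out explicitly by the Moser trick and the relative Poincar\'e lemma rather than by invoking the isotopy theorem as a black box. One small wrinkle: with your musical map $v\mapsto \imath_v\omega\vert_{T\mathcal{L}}$, the computation in stage~(ii) yields $\alpha_1(v_2)-\alpha_2(v_1)$, which under the paper's convention (Example~\ref{canonical_model}) is $-\omega_{\mathcal{L}}$ rather than $\omega_{\mathcal{L}}$. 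Inserting a minus sign in the identification $E\cong T^*\mathcal{L}$, exactly as the paper does in its definition of $\phi$, fixes this.
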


    A natural generalization of a symplectic manifold is the notion of the so-called $k$-symplectic manifolds. Recall that a $k$-symplectic structure on a manifold $M$ of dimension $n(k+1)$ is a family of $k$ closed $2$-forms $(\omega_1,\ldots, \omega_k)$ on $M$ such that $\cap_{r=1}^k\ker\omega_r=\{0\}$. Let us observe that the canonical model of a symplectic manifold is the cotangent bundle $T^*Q$, while the canonical model of $k$-symplectic manifold is the bundle of $k^1$-covelocities, that is the Whitney sum $T^*Q\oplus \stackrel{k}{\cdots} \oplus T^*Q$ of $k$ copies of the contangent bundle. The notion of $k$-symplectic structure was independently introduced  by A. Awane \cite{Aw-1992, Aw-1998}, G. G\"{u}nther \cite{Gu-1987},  M. de Le\'{o}n \textit{et al.} \cite{LMS-1988, LMS-1993}, and L.K. Norris \cite{MN-2000, Norris-1993}. Let us recall that $k$-symplectic manifolds provide a natural arena to develop classical field theory as an alternative to other geometrical settings, like multisymplectic manifolds (see \cite{arxive_mdl, RRSV_2011} for a survey on this subject).

    A Darboux theorem for $k$-symplectic manifolds has been proved in \cite{Aw-1992,LMS-1993}, but in this case, for $k\geq 2$ one needs an additional geometric ingredient. Indeed, it is necessary to assume the existence of a $k$-lagrangian integrable distribution of dimension $nk$ (let us observe that we can extend the usual concepts of isotropic, coisotropic and lagrangian submanifolds  of the symplectic geometry to the case of $k$-symplectic manifolds, see section \ref{section_lagrangian}). This fact leads us to introduce the notion of {\it polarized} $k$-symplectic manifold as a $k$-symplectic manifold $(M,\omega_1,\ldots, \omega_k)$ such that there exist $W$ a $k$-lagrangian integrable distribution of codimension $n$.

    The natural question is if we can generalize the Weinstein's normal form theorem for lagrangian submanifolds to this new geometric setting.

    The main result of the paper is stated as follows (see section \ref{normal_form_section}):

        \begin{theorem}
        Let $(M,\omega_1,\ldots, \omega_k, W)$ be a polarized $k$-symplectic manifold. Let $\mathcal{L}$ be a $k$-lagrangian submanifold which is complementary to $W$, that is,  $T\mathcal{L}\oplus W\vert_\mathcal{L}=TM\vert_\mathcal{L}$. Then there is a tubular neighborhood $U$ of $\mathcal{L}$ and a diffeomorphism $\Phi\colon U\to V\subset T^*\mathcal{L}\oplus\stackrel{k}{\cdots}\oplus T^*\mathcal{L}$ where $V$ is a neighborhood of the zero section, such that $\Phi\vert_\mathcal{L}$ is the standard identification of $\mathcal{L}$ with the zero section of $T^*\mathcal{L}\oplus\stackrel{k}{\cdots}\oplus T^*\mathcal{L}$, and
        \[
            \Phi^*\left(\Omega^\mathcal{L}_r\vert_V\right)=
            \omega_r\vert_U,
        \]for all $r\in\{1,\ldots, k\}$, where $(\Omega^\mathcal{L}_1,\ldots, \Omega^\mathcal{L}_k)$ is the canonical $k$-symplectic structure on $T^*\mathcal{L}\oplus\stackrel{k}{\cdots}\oplus T^*\mathcal{L}$.
    \end{theorem}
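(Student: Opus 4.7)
The plan is to imitate Weinstein's proof of the symplectic version, with the polarization $W$ playing the algebraic role that non-degeneracy of a single $2$-form plays in the classical case. The argument has an initial tubular-identification stage followed by a Moser-type isotopy on the model bundle.

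First I construct a diffeomorphism $\Phi_0\colon U\to V\subset T^*\mathcal{L}\oplus\stackrel{k}{\cdots}\oplus T^*\mathcal{L}$ that restricts to the standard identification on $\mathcal{L}$. The complementarity $T\mathcal{L}\oplus W|_\mathcal{L}=TM|_\mathcal{L}$ identifies $W|_\mathcal{L}$ with the normal bundle of $\mathcal{L}$ in $M$. Contraction with the structure forms defines a bundle morphism
\[
\Psi\colon W|_\mathcal{L}\longrightarrow T^*\mathcal{L}\oplus\stackrel{k}{\cdots}\oplus T^*\mathcal{L},\qquad w\longmapsto\bigl(\iota_w\omega_1|_{T\mathcal{L}},\ldots,\iota_w\omega_k|_{T\mathcal{L}}\bigr),
\]
between two bundles of rank $nk$. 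The $k$-Lagrangian property of $\mathcal{L}$ together with the non-degeneracy $\cap_r\ker\omega_r=0$ forces $\Psi$ to have trivial kernel, so it is an isomorphism. A tubular neighborhood of $\mathcal{L}$ built by flowing from $\mathcal{L}$ along the leaves of the integrable distribution $W$, then composed with $\Psi$ on each fibre, yields $\Phi_0$; by construction it sends $W$ to the canonical vertical polarization $W^\mathcal{L}$ of the model.

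Pulling back through $\Phi_0$ produces on $V$ two $k$-symplectic families, $\omega^0_r:=(\Phi_0^{-1})^*\omega_r$ and the canonical $\Omega^\mathcal{L}_r$, which agree on $\mathcal{L}$ and share the polarization $W^\mathcal{L}$. Setting $\omega^t_r=(1-t)\omega^0_r+t\,\Omega^\mathcal{L}_r$, the condition $\cap_r\ker\omega^t_r=0$ holds on $\mathcal{L}$ for every $t$ and, by continuity, persists on a smaller neighborhood uniformly in $t\in[0,1]$. Using the fibre dilation $\mu_s(q,p)=(q,sp)$ and the associated homotopy operator, I produce $1$-forms $\alpha_r$ satisfying
\[
d\alpha_r=\Omega^\mathcal{L}_r-\omega^0_r,\qquad \alpha_r|_\mathcal{L}=0,\qquad \iota_Y\alpha_r=0\ \text{for all}\ Y\in W^\mathcal{L};
\]
the last property uses that the Euler vector field of $\mu_s$ is a section of $W^\mathcal{L}$ and that $W^\mathcal{L}$ is Lagrangian for both $\Omega^\mathcal{L}_r$ and $\omega^0_r$. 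I then look for a time-dependent vector field $X_t$, taking values in $W^\mathcal{L}$, such that $\iota_{X_t}\omega^t_r=-\alpha_r$ for all $r$. Both sides annihilate $W^\mathcal{L}$ by construction; at points of $\mathcal{L}$ the system collapses to $\Psi_t(X_t)=-\bigl(\alpha_r|_{T\mathcal{L}}\bigr)_r$, where $\Psi_t\colon W^\mathcal{L}\to(T^*\mathcal{L})^k$ is the contraction morphism for $(\omega^t_r)$. Since $\Psi_t=\Psi$ on $\mathcal{L}$ and $\Psi$ is invertible there, shrinking $V$ preserves invertibility for every $t$, giving a unique smooth $X_t$ that vanishes on $\mathcal{L}$. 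Integrating $X_t$ produces an isotopy $\phi_t$ of $V$ fixing $\mathcal{L}$ pointwise, and the standard Moser calculation gives $\phi_1^*\Omega^\mathcal{L}_r=\omega^0_r$; the diffeomorphism $\Phi:=\phi_1\circ\Phi_0$ is then the required one.

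The main obstacle is the simultaneous solvability of the $k$ Moser equations by a single vector field. In the classical setting $(k=1)$ this is immediate from non-degeneracy of the interpolation, but here each $\omega^t_r$ is degenerate and only the combination of the polarization $W$, the complementarity hypothesis, and the $k$-Lagrangian condition on $\mathcal{L}$ produces the invertible morphism $\Psi_t$. A secondary technical point is that the relative Poincar\'e primitives $\alpha_r$ must be chosen so as to annihilate $W^\mathcal{L}$; an arbitrary primitive would be incompatible with the ansatz $X_t\in W^\mathcal{L}$, and one needs the dilation along the fibres of $(T^*\mathcal{L})^k\to\mathcal{L}$ precisely because it respects the polarization.
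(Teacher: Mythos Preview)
Your argument is correct and follows the paper's approach closely: the contraction bundle isomorphism $W|_\mathcal{L}\to(T^*\mathcal{L})^k$, a tubular identification, the relative Poincar\'e lemma via fibre dilation, linear interpolation of the two $k$-symplectic families, and a Moser isotopy generated by a $\pi^k_\mathcal{L}$-vertical vector field. The only substantive refinement is that you arrange $\Phi_0$ to carry the foliation $W$ to the model polarization $W^\mathcal{L}$ and use this to force the primitives $\alpha_r$ to annihilate $W^\mathcal{L}$, which makes the simultaneous solvability of $\iota_{X_t}\omega^t_r=-\alpha_r$ by a single vertical $X_t$ transparent; the paper simply asserts the existence and uniqueness of such a vertical $X_s$ without this analysis.
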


Notice that the above result can be compared to the corresponding one in the multsymplectic setting (see \cite{LMS-2007, Martin}).

    As a first consequence of the above results, we can induce a differentiable structure  to the group  of a $k$-symplectomorphism of a (polarized) $k$-symplectic manifold on itself. Let us recall that given two $k$-symplectic manifolds  $(M_i,\omega^i_1,\ldots, \omega^i_k), \, i=1,2$, a diffemorphism $\phi$ from $M_1$ to $M_2$ is called a $k$-symplectomorphism if and only if $\phi^*\omega^2_r=\omega^1_r,\, 1\leq r\leq k$. Along this paper, given a $k$-symplectic manifold $(M,\omega_1,\ldots, \omega_k)$ we denote by $G(M,\omega_1,\ldots, \omega_k)$ the group of automorphism of $M$ which are $k$-symplectomorphism; this group, $G(M,\omega_1,\ldots, \linebreak\omega_k)$,  is called the automorphism group of the $k$-symplectic manifold $M$.

    Using that there is a one-to-one correspondence between the group of automorphism of a symplectic manifold which are close to the identity  and the family of $k$ closed $1$-forms on $M$, one can introduce a chart in the group of the diffeomorphism of $M$.

    We hope to use these results to go further in the study of the group of the automorphism of a $k$-symplectic manifold.

    Throughout this paper we shall assume that our vector space and manifolds are finite dimensional.

\section{$k$-symplectic vector spaces and special subspaces}

    By an exterior form (or simply a form) on a vector space, we mean an alternating multilinear function on that space with values in the field of scalars. The contraction of a vector $v\in V$ and an exterior form $\omega$ on $V$ will be denoted by $\imath_v\omega$.

    In this section we study the \(k\)-symplectic vector spaces. After some introductory definitions, a model of \(k\)-symplectic vector space is described. Finally we discuss the notion of orthogonal complement of a subspace of a \(k\)-symplectic vector space and study some subspace with special properties.

    \begin{definition}
        A \(k\)-symplectic vector space \((\mathcal{V},\omega_1,\ldots,\omega_k)\) is a vector space \(\mathcal{V}\) of dimension \(n(k+1)\) and a family of \(k\) skew-symmetric bilinear forms \(\omega_1,\ldots,\omega_k\) such that
        \begin{equation}\label{nondeg_cond}
            \bigcap_{r=1}^k\ker\,\omega_r=\{0\}\,,
        \end{equation}
        where \(\ker\,\omega_r=\{u\in \mathcal{V} |\, \omega_r(u,v)=0,\,\forall v\in \mathcal{V} \}\) denotes the kernel of \(\omega_r\).
    \end{definition}

    Note that for \(k=1\) the above definition reduces to that of a symplectic vector space. The condition (\ref{nondeg_cond}) means that the induced linear map
    \[
        \begin{array}{rccl}
            {\sf b}\colon & \mathcal{V} & \to & \mathcal{V}^*\times \stackrel{k}{\cdots}\times \mathcal{V}^*\\\noalign{\medskip}
             & v &\mapsto & (\imath_v\omega_1,\ldots, \imath_v\omega_k)
        \end{array}
    \]
    is injective or equivalently that it has maximal rank, that is,  $rank\, {\sf b}=\dim\,\mathcal{V}=n(k+1)$.


    \begin{example}\label{euclidean space}

        We consider the vector space $\mathcal{V}=\mathbb{R}^3$ with the the family of skew-symmetric bilinear forms
                \[
                    \omega_1=e^1\wedge e^3 \quad \makebox{and} \quad \omega_2=e^2\wedge e^3\,,
                \]
        where $\{e_1,e_2,e_3\}$ is the canonical basis of $\mathbb{R}^3$ and $\{e^1,e^2,e^3\}$ the dual basis. It is easy to check that
        \[
            \ker\,\omega_1=span\{e_2\} \makebox{ and } \ker \,\omega_2=span\{e_1\}
        \] and therefore
        \(
            \ker\,\omega_1\cap\ker\,\omega_2=\{0\}
        \), that is, $(\omega_1,\omega_2)$ is a $2$-symplectic structure on $\mathbb{R}^3$.

    \end{example}

    \begin{example}\label{r6}
        We consider the vector space $\mathcal{V}=\mathbb{R}^6$ with the family of skew-symmetric bilinear forms
        \[
            \omega_1= e^1\wedge e^3 + e^4\wedge e^6 \makebox{ and } \omega_2= e^2\wedge e^3 + e^5\wedge e^6
        \]
        where $\{e_1,e_2,e_3,e_4,e_5,e_6\}$ is the canonical basis of $\mathbb{R}^6$ and $\{e^1,\ldots,e^6\}$ the dual basis. It is easy to check that
        \[
            \ker\,\omega_1=span\{e_2, e_5\} \makebox{ and } \ker \,\omega_2=span\{e_1, e_4\}
        \] and therefore
        \(
            \ker\,\omega_1\cap\ker\,\omega_2=\{0\}
        \), that is, $(\omega_1,\omega_2)$ is a $2$-symplectic structure on $\mathbb{R}^6$.

        Another $k$-symplectic structure on $\mathbb{R}^6$ is given by the family of $2$-forms $\omega^r=e^r\wedge e^6,\, r\in \{1,\ldots, 5\}$, which is a $5$-symplectic structure on $\mathbb{R}^6$.
    \end{example}
    \begin{example}\label{canonical_model}
        It is well-known that for any vector space $V$, the space $V\times V^*$ admits a canonical symplectic form $\omega_V$ given by
        \[
            \omega_V\left( (v,\alpha), (w,\beta)\right)= \beta(v)-\alpha(w)\,,
        \]
        for $v, w\in V$ and $\alpha,\beta\in V^*$ (see for instance \cite{AM-1978}). This structure has the following natural extension to the $k$-symplectic setting. For any $k$, the space $V\times V^*\times\stackrel{k}{\cdots}\times V^*$ can be equipped with a family of $k$ canonical skew-symmetric bilinear forms $(\omega^V_1,\ldots, \omega^V_k)$ given by
        \begin{equation}\label{canonical_omega}
            \omega^V_r\left( (v,\alpha_1,\ldots, \alpha_k),(w,\beta_1,\ldots, \beta_k)\right)=\beta_r(v)-\alpha_r(w)\,,
        \end{equation}
        for $v,w\in V$ and $(\alpha_1,\ldots, \alpha_k), (\beta_1,\ldots,\beta_k)\in V^*\times\stackrel{k}{\cdots}\times V^*$. It is now a simple computation to show that $(V\times V^*\times\stackrel{k}{\cdots}\times V^*, \omega^V_1,\ldots,\omega^V_k)$ is a $k$-symplectic vector space.

        Let us observe that if we consider the natural projection
            \[
                pr_r\colon (v,\alpha_1,\ldots,\alpha_k)\in V\times V^*\times \stackrel{k}{\cdots}\times V^*\to (v,\alpha_r)\in V\times V^*,
            \]
            the $2$-form $\omega^V_r$ is exactly $pr_r^*\omega_V$.
    \end{example}
    \begin{definition}
        Let $(\mathcal{V}_1,\omega^1_1,\ldots, \omega^1_k)$ and $(\mathcal{V}_2,\omega^2_1,\ldots, \omega^2_k)$ be two $k$-symplectic vector space and let $\phi\colon \mathcal{V}_1\to \mathcal{V}_2$ be a linear isomorphism. $\phi$ is called a $k$-symplecto\-mor\-phism if it preserves the $k$-symplectic structure, that is, $\phi^*\omega^2_r=\omega^1_r,\;\forall r\in\{ 1,\ldots, k\}$.
    \end{definition}


    On a symplectic vector space there is a natural notion of orthogonal complement of a subspace with respect to the given symplectic $2$-form. On a $k$-symplectic space, however, there are several options for defining some kind of ``orthogonality''. Indeed, let $(\mathcal{V},\omega_1,\ldots, \omega_k)$ be an arbitrary $k$-symplectic vector space, and let $W$ be a subspace of $\mathcal{V}$. For each $l$, with $1\leq l\leq k$, the \textit{$l$-th orthogonal complement} of $W$ is the linear subspace of $\mathcal{V}$ defined by
    \begin{equation}\label{l-orthogonal}
        W^{\bot,l}=\{ v\in \mathcal{V}\, |\, \omega_1(v,w)=\ldots =\omega_l(v,w)=0,\, \makebox{for all } w\in W\}\,.
    \end{equation}

    The following proposition collects some properties related to the above definition.

    \begin{proposition}\label{properties orthogonal}\
        \begin{enumerate}
            \item $W^{\bot,k}\subseteq W^{\bot,k-1}\subseteq \ldots \subseteq W^{\bot,1}$.

            \item For all index $l\in \{1,\ldots, k\}$ the following properties hold.
                \begin{enumerate}
                    \item $\{0\}^{\bot,l} = \mathcal{V}$.
                    \item If $V\subset W$ then $ W^{\bot, l}\subset V^{\bot, l}$.
                    \item $W\subset (W^{\bot, l})^{\bot, l}$.
                    \item $W\cap W^{\bot,l}=\displaystyle\bigcap_{r=1}^l\ker \,(\omega_r\vert_W)\,$. As a consequence one has
                             \[
                                \mathcal{V}^{\bot,l}=
                                \displaystyle\bigcap_{r=1}^l\ker\,
                                \omega_r \makebox{ and } \mathcal{V}^{\bot,k}=\{0\}.
                             \]
                    \item $ (V+W)^{\bot, l} \subset V^{\bot, l}\cap W^{\bot, l}$.
                \end{enumerate}

            \item Finally for all $l_1,l_2\in \{1,\ldots, k\}$,
                \begin{enumerate}
                    \item $V^{\bot, l_1} + W^{\bot, l_2} \subset (V\cap W)^{ \bot, \min\{l_1, l_2\}}$.
                    \item $V^{\bot, l_1} \cap W^{\bot, l_2}\subset (V+W)^{\bot, \min\{l_1,l_2\}}$.
                \end{enumerate}
        \end{enumerate}
    \end{proposition}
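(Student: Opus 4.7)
The plan is to verify each assertion as a direct consequence of the definition \eqref{l-orthogonal}, using nothing more than the bilinearity and skew-symmetry of each $\omega_r$, together with the non-degeneracy condition \eqref{nondeg_cond} for the single corollary $\mathcal{V}^{\bot,k}=\{0\}$. No deep step is anticipated; the only thing that requires care is bookkeeping the indices $l$, $l_1$, $l_2$ and the $\min\{l_1,l_2\}$ that appears in part (3).

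For part (1), I would observe that a vector $v$ belonging to $W^{\bot,l}$ satisfies $\omega_r(v,w)=0$ for $w\in W$ and $1\le r\le l$, so it a fortiori satisfies the same identities for $1\le r\le l-1$, giving $W^{\bot,l}\subseteq W^{\bot,l-1}$ and hence the whole chain by induction. For part (2), I would dispatch each item in one line: (a) the defining identities are vacuous when $W=\{0\}$; (b) if $V\subset W$ then the conditions defining $W^{\bot,l}$ are stronger than those defining $V^{\bot,l}$; (c) by the skew-symmetry of each $\omega_r$, a vector $w\in W$ satisfies $\omega_r(w,v)=-\omega_r(v,w)=0$ for every $v\in W^{\bot,l}$ and $1\le r\le l$; (d) both sides consist of the $w\in W$ with $\omega_r(w,w')=0$ for all $w'\in W$ and $1\le r\le l$, so taking $W=\mathcal{V}$ and invoking \eqref{nondeg_cond} for $l=k$ yields the displayed consequences; (e) since $V,W\subset V+W$, item (b) gives $(V+W)^{\bot,l}\subset V^{\bot,l}\cap W^{\bot,l}$.

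For part (3), I would expand a general element and use bilinearity. For (a), given $v=v_1+v_2$ with $v_1\in V^{\bot,l_1}$, $v_2\in W^{\bot,l_2}$, any $w\in V\cap W$ lies in both $V$ and $W$, so for $1\le r\le \min\{l_1,l_2\}$ we have $\omega_r(v_1,w)=0$ (as $r\le l_1$ and $w\in V$) and $\omega_r(v_2,w)=0$ (as $r\le l_2$ and $w\in W$), hence $\omega_r(v,w)=0$. For (b), given $v\in V^{\bot,l_1}\cap W^{\bot,l_2}$ and a generic $w=w_1+w_2\in V+W$, the same index comparison shows $\omega_r(v,w_1)=0$ and $\omega_r(v,w_2)=0$ for $1\le r\le \min\{l_1,l_2\}$, whence $\omega_r(v,w)=0$.

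The only step with any substance is the consequence $\mathcal{V}^{\bot,k}=\{0\}$ in (2)(d), which is precisely the restatement of the non-degeneracy hypothesis \eqref{nondeg_cond}; all remaining statements are formal verifications that do not use the dimension hypothesis on $\mathcal{V}$. I therefore expect no genuine obstacle and will present the argument as a compact list following the same numbering as the statement.
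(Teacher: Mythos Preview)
Your proposal is correct and follows essentially the same approach as the paper: both treat every item as a direct unwinding of the definition \eqref{l-orthogonal}, with the non-degeneracy condition \eqref{nondeg_cond} invoked only for the consequence $\mathcal{V}^{\bot,k}=\{0\}$. The sole cosmetic difference is in (iii)(a), where the paper first applies (i) and (ii.b) to get $V^{\bot,l_1}\subset (V\cap W)^{\bot,\min\{l_1,l_2\}}$ and $W^{\bot,l_2}\subset (V\cap W)^{\bot,\min\{l_1,l_2\}}$ separately and then adds, while you verify the inclusion directly on a decomposed element $v=v_1+v_2$; both are one-line arguments.
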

    \begin{proof}
        The assertions (i), (ii.a) and (ii.b) are simple verifications. For (ii.c)  by definition of $W^{\bot, l}$, for all $v\in W^{\bot, l}$, one obtains that if $w\in W$, then $\omega_1(v,w)=\cdots = \omega_l(v,w)=0$, i.e. $w\in (W^{\bot, l})^{\bot, l}$.

        To prove (ii.d) we consider that
        \begin{align*}
                 & W\cap W^{\bot, l} \\
                 = & W\cap \{v\in \mathcal{V}\, | \, \omega_1(v,w)=\cdots =\omega_l(v,w)=0,\makebox{ for all } w\in W\} \\
                  = &\{v\in W\, | \, \omega_1(v,w)=\cdots = \omega_l(v,w)=0,\makebox{ for all } w\in W\}
                 \\
                  = &\displaystyle\bigcap_{r=1}^l \{v\in W\, | \, \omega_r(v,w)=0,\makebox{ for all } w\in W\}
                 = \displaystyle\bigcap_{r=1}^l\ker\, (\omega_r\vert_W)\,.
            \end{align*}

        In particular, one has that
        \[
            \mathcal{V}^{\bot,l} = \mathcal{V}\cap \mathcal{V}^{\bot,l} = \displaystyle\bigcap_{r=1}^l\ker\,\omega_r\,.
        \]

        Finally, when $l=k$, from the above expression and the $k$-symplectic condition (\ref{nondeg_cond}) one obtains
        \(
            \mathcal{V}^{\bot, k}=\displaystyle\bigcap_{r=1}^k\ker\,\omega_r=\{0\}\,.
        \)

        For (ii.e), since  $V\subset V+W$ and $W\subset V+W$, from (ii.b) one obtains that $(V+W)^{\bot, l}\subset V^{\bot, l}$ and $(V+W)^{\bot, l}\subset W^{\bot, l}$, so $(V+W)^{\bot, l}\subset V^{\bot, l}\cap W^{\bot, l} $.

         \smallskip

         To prove (iii) we consider two index $l_1,l_2\in\{1,\ldots, k\}$. Since $V\cap W\subset V$ and $V\cap W\subset W$, as a consequence of (i) and (ii.b) one has $$ V^{\bot, l_1}\subset (V\cap W)^{\bot, l_1}\subset (V\cap W)^{\bot, \min\{l_1,l_2\}} $$ and $$  W^{\bot, l_2}\subset (V\cap W)^{\bot, l_2}\subset (V\cap W)^{\bot, \min\{l_1,l_2\}}.$$ Therefore  $V^{\bot, l_1} + W^{\bot, l_2} \subset (V\cap W)^{ \bot, \min\{l_1, l_2\}}$.

         Now, for (iii.b), let $u\in V^{\bot, l_1}\cap W^{\bot, l_2}$ be, then $ \omega_1(u,v) = \cdots = \omega_{l_1}(u,v)=0,$ for all $v\in V$ and $ \omega_1(u,w) = \cdots = \omega_{l_2}(u,w) =0$ for all $w\in W$. Thus, we have that
                \(
                    \omega_1(u,\lambda v+ \mu w)=\cdots = \omega_{\min\{l_1,l_2\}}(u,\lambda v + \mu w) =0,\) for all $\lambda v+ \mu w\in V+W $, that is
                $u\in (V+W)^{\bot, \min\{l_1,l_2\}}$, and therefore $V^{\bot, l_1}\cap W^{\bot, l_2}\subset (V+W)^{\bot, \min\{l_1,l_2\}}$.
    \end{proof}
    \begin{remark}
        In the symplectic case ($k=1$), the above relations reduce to the well-know properties of the orthogonal complement of a subspace \cite{AM-1978}.
    \end{remark}
    \begin{remark}
        In general $W\neq (W^{\bot, l})^{\bot, l}$. In fact, we consider the vector space $\mathcal{V}=\mathbb{R}^3$ with the $2$-symplectic structure given by
                \[
                    \omega_1=e^1\wedge e^3 \quad \makebox{and} \quad \omega_2=e^2\wedge e^3\,,
                \]
        where $\{e_1,e_2,e_3\}$ is the canonical basis of $\mathbb{R}^3$ and $\{e^1,e^2,e^3\}$ the dual basis. If $W=\{0\}$, then
                \[
                    (\{0\}^{\bot, 1})^{\bot, 1}=(\mathbb{R}^3)^{\bot,1}=\ker\,\omega_1=span\{e_2\}\neq \{0\}\,.
                \]
    \end{remark}

    Now, putting $l=l_1=l_2$ in (iii.b) of the previous Proposition, we derive the following result.

    \begin{corollary}\label{corol1}
     For any two subspaces $V$ and $W$ of a $k$-symplectic vector space $(\mathcal{V},\omega_1,\ldots, \omega_k)$, we have
        \begin{enumerate}
            \item $V^{\bot, l}\cap W^{\bot, l}=(V+W)^{\bot, l}$.
            \item $\left( (V^{\bot, l}+W^{\bot, l})^{\bot, l} \right)^{\bot, l} \subset  (V\cap W)^{\bot, l}.$
         \end{enumerate}
    \end{corollary}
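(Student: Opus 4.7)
The plan is to derive both parts as bookkeeping consequences of Proposition \ref{properties orthogonal}, applied with $l_1=l_2=l$, precisely as the sentence preceding the Corollary suggests.

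For part (i), one inclusion, $(V+W)^{\bot,l}\subset V^{\bot,l}\cap W^{\bot,l}$, is already recorded as (ii.e) of the Proposition. The reverse inclusion is exactly (iii.b) with $l_1=l_2=l$. Combining the two yields the desired equality.

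For part (ii), I would start from (iii.a) specialized to $l_1=l_2=l$, which reads
\[
    V^{\bot,l}+W^{\bot,l}\subset (V\cap W)^{\bot,l}.
\]
Applying the antitone property (ii.b) twice then produces
\[
    \bigl((V^{\bot,l}+W^{\bot,l})^{\bot,l}\bigr)^{\bot,l}\subset \bigl(((V\cap W)^{\bot,l})^{\bot,l}\bigr)^{\bot,l}.
\]
The argument is completed by the general triple-orthogonal identity $((S^{\bot,l})^{\bot,l})^{\bot,l}=S^{\bot,l}$, valid for any subspace $S$: applying (ii.c) to $S^{\bot,l}$ in place of $W$ gives the inclusion $S^{\bot,l}\subset ((S^{\bot,l})^{\bot,l})^{\bot,l}$, while (ii.b) applied to $S\subset (S^{\bot,l})^{\bot,l}$ (itself from (ii.c)) gives the reverse inclusion. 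Taking $S=V\cap W$ collapses the right-hand side of the previous display to $(V\cap W)^{\bot,l}$ and chaining the containments delivers the claim. I do not foresee any genuine obstacle; the only point needing mild care is that in general $W\neq (W^{\bot,l})^{\bot,l}$ (as the Remark shows), so the argument for (ii) must be routed through this weaker triple-equals-single identity rather than any reflexivity statement.
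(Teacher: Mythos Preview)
Your proof of part (i) is identical to the paper's: both combine (ii.e) and (iii.b) of Proposition~\ref{properties orthogonal} with $l_1=l_2=l$ to obtain the two inclusions.

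For part (ii) your argument is correct but follows a different route. You start from (iii.a), take two successive orthogonals via (ii.b), and then invoke the auxiliary identity $((S^{\bot,l})^{\bot,l})^{\bot,l}=S^{\bot,l}$, which you derive separately from (ii.b) and (ii.c). The paper instead begins from the inclusion $V\cap W\subset (V^{\bot,l})^{\bot,l}\cap (W^{\bot,l})^{\bot,l}$ (applying (ii.c) to each of $V$ and $W$), then recognises the right-hand side as $(V^{\bot,l}+W^{\bot,l})^{\bot,l}$ via part~(i) just proved, and finishes with a single application of (ii.b). The paper's path is shorter and leverages part~(i) of the corollary itself, avoiding any need for the triple-orthogonal lemma; your path is a bit more circuitous but has the minor virtue of establishing the reusable identity $((S^{\bot,l})^{\bot,l})^{\bot,l}=S^{\bot,l}$ along the way.
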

    \begin{proof}
    From (ii.e) and (iii.b) of the Proposition \ref{properties orthogonal} one obtains
    \[
        (V+W)^{\bot, l}\subset V^{\bot, l}\cap W^{\bot, l}\subset (V+W)^{\bot, l}\,,
    \]
    then $(V+W)^{\bot, l}= V^{\bot, l}\cap W^{\bot, l}$.

    For (ii), notice that using  (ii.c) of the previous Proposition  and (i) here,
         \[
            V\cap W \subset (V^{\bot, l})^{\bot, l} \cap (W^{\bot, l})^{\bot, l} = (V^{\bot, l} + W^{\bot, l})^{\bot, l}.
         \]

         So, from (ii.b), (ii) holds.
    \end{proof}

    \begin{remark}

        In the $k$-symplectic setting there is an important difference with the symplectic case. In this setting is not true that
        \[
            \dim W + \dim W^{\bot, k} = \dim \mathcal{V}.
        \]
        In fact, we consider the $2$-symplectic vector space of the example \ref{euclidean space} and the subspace $W=span\{e_3\}$. It is trivial that $W^{\bot, 2}=span\{e_3\}^{\bot, 2}=span\{e_3\}$, then
        \[
            \dim W + \dim W^{\bot, 2}=2\dim span\{e_3\}= 2 \neq 3=\dim \mathbb{R}^3\,.
        \]

        Another example is the following. Consider the $k$-symplectic vector space $\mathcal{V}=V\times V^*\times\stackrel{k}{\cdots}\times V^*$ and the subspace $W=V^*\times\stackrel{k}{\cdots}\times V^*$. It is easy to check that $W^{\bot, k}= W$, and then we obtain
        \[
            \dim W+ \dim W^{\bot, k}= 2\dim V^*\times \stackrel{k}{\cdots}\times V^*=2k\dim V
        \]
        and $\dim \mathcal{V}= (k+1)\dim V$, so  $\dim W + \dim W^{\bot, k} = \dim \mathcal{V}$ if and only if $k=1$.

        In general, we only have the trivial formula
        \[
            \dim W + \dim W^{\bot, l}\leq 2\dim \mathcal{V}\,.
        \]
    \end{remark}
\begin{subsection}{Isotropic, coisotropic and Lagrangian subspace}\label{section_lagrangian}
    We can now introduce the following special types of subspaces of a $k$-symplectic vector space, generalizing the corresponding notions in symplectic geometry, \cite{AM-1978}.
    \begin{definition}\label{subspaces}
        Let $(\mathcal{V},\omega_1,\ldots, \omega_k)$ be a $k$-symplectic vector space. A subspace $W$ of $\mathcal{V}$ is said to be
        \begin{enumerate}
            \item $l$-isotropic if $W\subset W^{\bot, l}$;
            \item $l$-coisotropic if $W^{\bot, l}\subset W$;
            \item $l$-lagrangian if $W$ is $l$-isotropic and there exists a $l$-isotropic subspace $U$ of $\mathcal{V}$, such that $\mathcal{V}=U\oplus W$.
        \end{enumerate}
    \end{definition}
    \begin{example}\label{euclidean example}
        We consider the vector space $\mathcal{V}=\mathbb{R}^3$ with the $2$-symplectic structure introduced in the example \ref{euclidean space}.
            \begin{itemize}
                \item $span\{e_2\}^{\bot, 1}=\mathbb{R}^3$ and $span\{e_2\}^{\bot, 2}=span\{e_1,e_2\}$, so $span\{e_2\}$ is a $1$-isotropic and $2$-isotropic subspace.
                \item $span\{e_1,e_3\}^{\bot, 2}=\{0\}$, then $span\{e_1,e_3\}$ is a $2$-coisotropic subspace.
                \item $span\{e_1\}$ is a $1$-lagrangian subspace. In fact, $span\{e_1\}\subset span\{e_1\}^{\bot, 1}= span\{e_1,e_2\}$ and there exists $U=span\{e_2,e_3\}$ such that $\mathbb{R}^3=span\{e_1\}\oplus  span\{e_2,e_3\}$ and $span\{e_2,e_3\}\subset span\{e_2,e_3\}^{\bot, 1}=span\{e_2,e_3\}$.
                \item $span\{e_3\}=span\{e_3\}^{\bot, 2},$ $\, span\{e_1,e_2\}^{\bot, 2}=span\{e_1,e_2\}$ and $\mathbb{R}^3=span\{e_1,e_2\}\oplus span\{e_3\}$. Therefore, $span\{e_3\}$ is a $2$-lagrangian subspace.
            \end{itemize}
    \end{example}
    \begin{example} A more general situation is the following. We consider the canonical model of $k$-symplectic vector space $\mathcal{V}= V\times V^*\times\stackrel{k}{\cdots}\times V^*$, described in the example \ref{canonical_model}, where $V$ is any vector space. Identifying $V$ and $V^{k*}=V^*\times\stackrel{k}{\cdots}\times V^*$ with the subspaces $V\times \{0\}$ and $\{0\}\times V^{k*}$, respectively, we have,
        \begin{enumerate}
            \item $V^{k*}=V^*\times\stackrel{k}{\cdots}\times V^*$ is a $l$-isotropic and $l$-coisotropic subspace for all $l\in \{1,\ldots, k\}$. In fact, it is straightforward to check that $(V^{k*})^{\bot, l}=V^{k*}$.
            \item The subspace $V\subset \mathcal{V}=V\times V^*\times\stackrel{k}{\cdots}\times V^*$ satisfies that $V^{\bot, k}=V$, then $V$ is a $k$-isotropic an a $k$-coisotropic subspace.
            \item $V$ and $V^*\times\stackrel{k}{\cdots}\times V^*$ are $k$-lagrangian subspaces.
        \end{enumerate}
    \end{example}

    The last two examples of $k$-lagrangian subspaces illustrate that  in a $k$-symplectic space, the $k$-lagrangian subspaces need not all have the same dimension. The situation is thus quite different from the finite dimensional symplectic case ($k=1$), where all lagrangian subspaces have the same dimension, namely half the dimension of the given symplectic space.

    In view of (i) of the Proposition \ref{properties orthogonal} it is clear that an $l$-isotropic (resp. $l$-coisotropic)  subspace is also $l'$-isotropic (resp. $l''$-coisotropic) for all $1\leq l'< l$ (resp. $l < l''\leq k$).

    In the next Proposition we collect a few interesting properties concerning the concepts of isotropic, coisotropic and lagrangian subspaces.

    \begin{proposition}\label{charac_subspace}
        Let $(\mathcal{V},\omega_1,\ldots, \omega_k)$ be a $k$-symplectic vector space and $W$ a subspace of $\mathcal{V}$.
        \begin{enumerate}
            \item Each subspace of dimension $1$ is $l$-isotropic for all $l\in \{1,\ldots, k\}$.
            \item Each subspace of codimension $1$ is $k$-coisotropic.
            \item $W$ is $l$-isotropic if, and only if,  $\omega_r\vert_{W\times W}=0$ for all $r\in\{1,\ldots, l\}$.
            \item If $W=W^{\bot, l}$, then $W$ is a $l$-lagrangian subspace, for each $l\in\{1,\ldots,r\}$.
            \item If $W$ is $k$-lagrangian then $W=W^{\bot, k}$.
            \item If $U$ is a $l$-isotropic subspace of $\mathcal{V}$, then for every $l'\leq l$ there exist a $l'$-lagrangian subspace which containts to $U$.
        \end{enumerate}
    \end{proposition}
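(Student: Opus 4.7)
Parts (i), (ii) and (iii) are essentially unpackings of definitions. For (i), if $W=\mathrm{span}\{v\}$, then $\omega_r(\alpha v,\beta v)=\alpha\beta\,\omega_r(v,v)=0$ by antisymmetry, so $W\subset W^{\bot,l}$ for every $l\in\{1,\ldots,k\}$. For (ii), if $W$ has codimension $1$ and $v\in W^{\bot,k}\setminus W$, then $\mathcal{V}=W+\mathrm{span}\{v\}$ and $\omega_r(v,\cdot)$ vanishes on both $W$ (by $v\in W^{\bot,k}$) and on $v$ itself (antisymmetry), hence on all of $\mathcal{V}$, for each $r$; this forces $v\in\bigcap_{r}\ker\omega_r=\{0\}$ by (\ref{nondeg_cond}), a contradiction, so $W^{\bot,k}\subset W$. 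Statement (iii) is the definitional equivalence between $W\subset W^{\bot,l}$ and $\omega_r(w,w')=0$ for all $w,w'\in W$ and $r\leq l$.

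For (v), fix a $k$-isotropic complement $U$ of $W$ so that $\mathcal{V}=W\oplus U$, and take $v\in W^{\bot,k}$, writing $v=w+u$ with $w\in W$, $u\in U$. Since $W$ is $k$-isotropic, $\omega_r(u,w')=\omega_r(v,w')-\omega_r(w,w')=0$ for every $w'\in W$ and $r\leq k$; since $U$ is $k$-isotropic, $\omega_r(u,u')=0$ for every $u'\in U$. Hence $\omega_r(u,\cdot)\equiv 0$ on $W\oplus U=\mathcal{V}$ for each $r$, so $u\in\bigcap_{r=1}^{k}\ker\omega_r=\{0\}$ by (\ref{nondeg_cond}); thus $v=w\in W$, giving $W^{\bot,k}\subset W$.

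The main work is (iv). Assuming $W=W^{\bot,l}$, the plan is to construct the required $l$-isotropic complement by a maximality argument: pick $U$ of maximal dimension among $l$-isotropic subspaces with $U\cap W=\{0\}$, and show $\mathcal{V}=U\oplus W$. Maximality immediately yields $U^{\bot,l}\subset U+W$, since any $v'\in U^{\bot,l}\setminus(U+W)$ would enlarge $U$ to the $l$-isotropic subspace $U+\mathrm{span}\{v'\}$, still disjoint from $W$. Decomposing $x\in U^{\bot,l}\subset U+W$ as $x=u+w$ and using $U\subset U^{\bot,l}$ yields the direct-sum decomposition $U^{\bot,l}=U\oplus(W\cap U^{\bot,l})$, together with the identity $(U+W)^{\bot,l}=W\cap U^{\bot,l}$, which crucially uses $W^{\bot,l}=W$ through Proposition \ref{properties orthogonal}(ii). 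The last step, and the main obstacle, is to upgrade these relations to the equality $U+W=\mathcal{V}$; I expect to accomplish this by playing the injection $\mathcal{V}/W\hookrightarrow(W^{\ast})^l$, $[v]\mapsto(\omega_r(v,\cdot)|_W)_r$ (injective precisely because $W^{\bot,l}=W$), against the analogous injection $\mathcal{V}/U^{\bot,l}\hookrightarrow(U^{\ast})^l$ to force the required dimension matching.

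Part (vi) then follows from (iv). Since an $l$-isotropic subspace is $l'$-isotropic for every $l'\leq l$ by Proposition \ref{properties orthogonal}(i), one may extend $U$ to a subspace $L\supset U$ maximal among $l'$-isotropic subspaces of $\mathcal{V}$ containing $U$. For any $v\in L^{\bot,l'}\setminus L$, the subspace $L+\mathrm{span}\{v\}$ would still be $l'$-isotropic (using $\omega_r(v,w)=0$ for $w\in L$ and antisymmetry), contradicting maximality of $L$; hence $L^{\bot,l'}\subset L$, and combined with $L\subset L^{\bot,l'}$ one obtains $L=L^{\bot,l'}$. Part (iv) then yields an $l'$-isotropic complement, so $L$ is $l'$-lagrangian and it contains $U$ by construction.
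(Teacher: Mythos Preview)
Your treatments of (i), (ii), (iii) and (v) are correct and essentially identical to the paper's. Your reduction of (vi) to (iv) via a maximal $l'$-isotropic extension $L\supset U$ with $L=L^{\bot,l'}$ is also correct and is exactly the paper's argument.

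The gap is in (iv). Taking $U$ maximal among $l$-isotropic subspaces with $U\cap W=\{0\}$, you correctly obtain $U^{\bot,l}\subset U+W$, the decomposition $U^{\bot,l}=U\oplus(W\cap U^{\bot,l})$, and $(U+W)^{\bot,l}=W\cap U^{\bot,l}$. But the step you flag as ``the main obstacle'' really is one, and the dimension count you sketch does not close it. The two injections yield only
\[
\dim\mathcal{V}-\dim W\;\le\; l\,\dim W,
\qquad
\dim\mathcal{V}-\dim U^{\bot,l}\;\le\; l\,\dim U,
\]
and since $\dim U^{\bot,l}=\dim U+\dim(W\cap U^{\bot,l})\le\dim U+\dim W$, the second inequality gives at best $\dim\mathcal{V}-\dim W\le(l+1)\dim U$. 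For $l\ge1$ this is strictly weaker than the required $\dim\mathcal{V}-\dim W\le\dim U$, and no algebraic combination of these rank bounds forces $U\oplus W=\mathcal{V}$.

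The paper's route is different: rather than picking a maximal $U$ at once, it builds a chain $U_1\subset U_2\subset\cdots$ of $l$-isotropic subspaces with $U_i\cap W=\{0\}$ while carrying the additional invariant $\mathcal{V}=W+U_i^{\bot,l}$ through the induction. That invariant is exactly what your maximality argument lacks: once one knows both $U^{\bot,l}\subset U+W$ (termination/maximality) \emph{and} $\mathcal{V}=W+U^{\bot,l}$, the conclusion $\mathcal{V}=U+W$ is immediate. So to salvage your approach you would have to prove $W+U^{\bot,l}=\mathcal{V}$ for your maximal $U$ by some means other than the rank inequalities above; as written, the proposal for (iv) is genuinely incomplete.
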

    \begin{proof}

        (i)\; If $W$ is a $1$-dimensional subspace of $\mathcal{V}$ it readily follows from the definition of the $k$-orthogonal complement that $W\subset W^{\bot, k}$, i.e. $W$ is $k$-isotropic and then $l$-isotropic for every $l\leq k$.

        (ii)\; Let $W$ be a $(n(k+1)-1)$-dimensional subspace of $\mathcal{V}$ and let $\{e_1,\ldots, e_{n(k+1)-1}\}$ denote a basis of $W$. For any $v\in W^{\bot, k}$ we have, by definition
        \[
            \omega^r(v,e_i)=0,
        \]
        for all $1\leq i\leq n(k+1)-1$, $1\leq r\leq k$. If $0\neq v\not\in  W$, then it really follows that $\imath_v\omega_r=0,\, 1\leq r\leq k$, that is, $v\in \cap_{r=1}^k\ker\omega_r=\{0\}$, which it is a contradiction, therefore $W^{ \bot, k}\subset W$, i. e. $W$ is a $k$-coisotropic subspace.
        
        (iii)\; Let $W$ be a $l$-isotropic space, that is, $$W \subset W^{\bot, l}=\{ v\in \mathcal{V} \,|\, \omega_1(v,w)=\ldots =\omega_l(v,w)=0,\, \makebox{for all } w\in W\},$$ then for all $v, w \in W$ one has $\omega_1(v,w)=\ldots =\omega_l(v,w)=0$,  or equivalently $\omega_r\vert_{W\times W}=0\,,\; r\in \{1,\ldots, l\}$. Conversely, if $v\in W$, then $\omega_1(v,w)=\cdots=\omega_l(v,w)=0$, for all $w\in W$, then, by the definition \ref{l-orthogonal}, one has that $v\in W^{\bot,l}$, that is, $W\subset W^{\bot, l}$.

         (iv)\; We now suppose that $W=W^{\bot, l}$, we construct $U$ of definition \ref{subspaces}(iii) as follows. Choose arbitrarily $u_1\not\in W$ and let $U_1=span(u_1)$; since $W\cap U_1=\{0\}$, one has
         \[
            \mathcal{V}=\{0\}^{\bot, l}=(W\cap U_1)^{\bot, l} = W^{\bot, l} + U_1^{\bot, l}=W + U_1^{\bot, l}\,.
         \]
         Moreover, as $U_1$ has dimension $1$, it is $l$-isotropic.

         Now take a vector $u_2\in U_1^{\bot, l}, \, u_2\not\in W+U_1$, let $U_2=U_1+span\{u_2\}$. By construction $W\cap U_2=\{0\}, \, \mathcal{V}=W+U_2^{\bot, l}$ and from (i) in Corollary \ref{corol1}, one has
         \[
            U_2=U_1+span\{u_2\}\subset U_1^{\bot,l}\cap span\{u_2\}^{\bot, l}=(U_1 + span\{u_2\})^{\bot, l}=U_2^{\bot, l},
         \]since $u_2\in U_1^{\bot, l}$. If we continue inductively, we may construct a chain of $l$-isotropic subspaces $U_1\subset U_2\subset\cdots$, such that $W\cap U_i=\{0\}$ and $\mathcal{V}=W + U_i^{\bot, l}$. This chain necessarily possesses a maximal element $U_s$  for which $U_s=U_s^{\bot, l}$. Thus we can choose $U=U_s$.

        (v)\; We now prove that the assertion \textit{ $W$ is $k$-lagrangian} implies that $W=W^{\bot, k}$. We have $W\subset W^{\bot, k}$ by definition of $k$-lagrangian subspace. Conversely, let $v\in W^{\bot, k}$ and write $v=u+w$, where $u\in U$ and $w\in W$, being $U$ the subspace given in definition \ref{subspaces}(iii). We shall show that $u=0$. Indeed, since $U$ is, in particular, $k$-isotropic, $u\in U^{\bot, k}$. Similarly $u=v-w\in W^{\bot, k}$, then $u\in U^{\bot, k}\cap W^{\bot, k}= (U+W)^{\bot, k}=\mathcal{V}^{\bot, k}=0$. Thus, $u=0$, so $W^{\bot, k}\subset W$ and the identity $W^{\bot, k}=W$ holds.

        For (vi), it suffices to prove that every $l$-isotropic subspace $U$ is contained in a $l$-lagrangian subspace of $\mathcal{V}$ since, as noticed above, a $l$-isotropic subspace is also $l'$-isotropic for every $l'\leq l$.

        By assumption, $U\subset U^{\bot, l}$. If $U\neq U^{\bot, l}$, take a vector $v_1\in U^{\bot, l}$ such that $v_1\not \in U$. Let $U_1=U+span\{v_1\}$. By construction $U_1 \subset U_1^{\bot, l}$, indeed since $v_1\in U_1^{\bot, l}$,
        \[
            U_1=U\oplus span (v_1)\subset U^{\bot, l}\cap span\{v_1\}^{\bot, l} = (U + span\{v_1\})^{\bot, l}= U_1^{\bot,l}.
        \]
        Summarizing, we thus have the inclusions $U\subset U_1\subset U_1^{\bot, l}\subset U^{\bot, l}$. Continuing inductively, we may construct a chain of $l$-isotropic subspaces $U\subset U_1\subset U_2\subset\cdots$ which necessarily possesses a maximal element $W$ for which $W=W^{\bot, l}$. By (iv) we know that this condition implies that $W$ is $l$-lagrangian subspace.
    \end{proof}
    \begin{remark}
    Form items (iv) and (v) of the above proposition we obtain that a subspace $W$ of a $k$-symplectic vector space $(\mathcal{V},\omega_1,\ldots,\omega_k)$ is $k$-lagrangian if and only if $W=W^{\bot, k}$. Observe that this equivalence is only valid for the index $k$ (see for instance example \ref{euclidean example}). As a consequence of this characterization is easy to prove that a $k$-lagrangian subspace $U$ is maximal in the sense that there is not a another $k$-lagrangian subspace $V$ such that $U\subset V$. In fact, in that case, from item (ii.b) of Proposition \ref{properties orthogonal} one has,
    \[
        U\subset V\subset V^{\bot, k}\subset U^{\bot, k}=U
    \]
    and therefore $U=V$.
    \end{remark}
    \begin{proposition}\label{canonical_relation}
        Let $(\mathcal{V}_1,\omega^1_1,\ldots, \omega^1_k)$ and $(\mathcal{V}_2,\omega^2_1,\ldots, \omega^2_k)$ be two $k$-symplectic vector space and $\pi_i\colon \mathcal{V}_1\times \mathcal{V}_2\to \mathcal{V}_i$ the canonical projection, $i=1,2$. The family
        \[
            (\omega^1_1\ominus\omega^2_1,\ldots, \omega^1_k\ominus\omega^2_k)
        \]
        defined by
        $\omega^1_r\ominus\omega^2_r=\pi_1^*\omega^1_r-\pi_2^*\omega^2_r$, is a $k$-symplectic structure on $\mathcal{V}_1\times \mathcal{V}_2$.
    \end{proposition}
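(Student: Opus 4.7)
The plan is to verify the three defining conditions of a $k$-symplectic structure on $\mathcal{V}_1 \times \mathcal{V}_2$: the correct dimension, skew-symmetry and bilinearity of each form, and the nondegeneracy condition $\bigcap_{r=1}^k \ker(\omega^1_r \ominus \omega^2_r) = \{0\}$.

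The dimensional count is immediate: writing $\dim \mathcal{V}_i = n_i(k+1)$, we get $\dim(\mathcal{V}_1\times\mathcal{V}_2) = (n_1+n_2)(k+1)$, which has the required form $n(k+1)$ with $n = n_1 + n_2$. Skew-symmetry and bilinearity of each $\omega^1_r \ominus \omega^2_r = \pi_1^*\omega^1_r - \pi_2^*\omega^2_r$ follow automatically, since the pullback by a linear map of a skew-symmetric bilinear form is skew-symmetric and bilinear, and the space of such forms is closed under subtraction.

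The core of the argument is the nondegeneracy condition. I would take an arbitrary $(v_1, v_2) \in \bigcap_{r=1}^k \ker(\omega^1_r \ominus \omega^2_r)$, so that
\[
\omega^1_r(v_1, w_1) - \omega^2_r(v_2, w_2) = 0
\]
for every $(w_1, w_2) \in \mathcal{V}_1 \times \mathcal{V}_2$ and every $r \in \{1, \ldots, k\}$. Specializing to $w_2 = 0$ yields $\omega^1_r(v_1, w_1) = 0$ for all $w_1 \in \mathcal{V}_1$ and every $r$, forcing $v_1 \in \bigcap_{r=1}^k \ker \omega^1_r$, which is $\{0\}$ by the $k$-symplectic condition on $\mathcal{V}_1$. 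Symmetrically, setting $w_1 = 0$ gives $v_2 \in \bigcap_{r=1}^k \ker \omega^2_r = \{0\}$. Thus $(v_1, v_2) = 0$.

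There is no real obstacle here; the argument is essentially the obvious block-decomposition of the kernel, and the key ingredient is the ability to plug in independently chosen test vectors from each factor thanks to the direct product structure. The sign in the definition $\pi_1^*\omega^1_r - \pi_2^*\omega^2_r$ is irrelevant for nondegeneracy (a plus sign would work equally well); the minus sign is the usual convention needed later, e.g.\ to make the diagonal $k$-lagrangian when the two factors are $k$-symplectomorphic.
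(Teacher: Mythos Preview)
Your proof is correct. The paper states this proposition without proof, treating it as a routine verification; your argument supplies exactly that verification, checking the dimension count, bilinearity/skew-symmetry, and the nondegeneracy condition by testing against vectors in each factor separately.
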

    \begin{proposition}\label{graph}
        Let $(\mathcal{V}_1,\omega_1^1,\ldots,\omega^1_k)$ and $(\mathcal{V}_2,\omega_1^2,\ldots, \omega^2_k)$ be two $k$-symplectic vector space and $\phi\colon \mathcal{V}_1\to \mathcal{V}_2$ a linear isomorphism. $\phi$ is a $k$-symplectomorphism if and only if its graph,
        \[
            \Gamma_\phi=\{(v_1,\phi(v_1) \, |\, v_1\in \mathcal{V}_1\}
        \]
        is a $k$-lagrangian subspace of $(\mathcal{V}_1\times \mathcal{V}_2,\omega^1_1\ominus\omega^2_1,\ldots, \omega^1_k\ominus\omega^2_k)$.
    \end{proposition}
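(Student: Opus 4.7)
The plan is to use the characterization noted in the remark after Proposition \ref{charac_subspace}: a subspace $W$ of a $k$-symplectic space is $k$-lagrangian if and only if $W = W^{\bot,k}$. Hence the strategy reduces to showing that $\phi^*\omega^2_r = \omega^1_r$ for all $r$ is equivalent to $\Gamma_\phi = \Gamma_\phi^{\bot,k}$ inside the product $k$-symplectic space $(\mathcal{V}_1\times\mathcal{V}_2,\omega^1_1\ominus\omega^2_1,\ldots,\omega^1_k\ominus\omega^2_k)$, whose $k$-symplectic nature is guaranteed by Proposition \ref{canonical_relation}.

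For the easy direction ($\Leftarrow$), I would assume $\Gamma_\phi$ is $k$-lagrangian. Then in particular $\Gamma_\phi \subset \Gamma_\phi^{\bot,k}$, so by Proposition \ref{charac_subspace}(iii) every $(\omega^1_r\ominus\omega^2_r)$ vanishes on $\Gamma_\phi\times\Gamma_\phi$. Unpacking the definition of $\ominus$ on pairs $(v_1,\phi(v_1))$ and $(w_1,\phi(w_1))$ gives $\omega^1_r(v_1,w_1) = \omega^2_r(\phi(v_1),\phi(w_1))$ for every $v_1,w_1\in\mathcal{V}_1$ and every $r$, which is exactly the $k$-symplectomorphism condition.

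For the harder direction ($\Rightarrow$), assume $\phi^*\omega^2_r = \omega^1_r$ for all $r$. A direct computation shows that every $\omega^1_r\ominus\omega^2_r$ vanishes on pairs of elements of $\Gamma_\phi$, so $\Gamma_\phi$ is $k$-isotropic, i.e.\ $\Gamma_\phi\subset\Gamma_\phi^{\bot,k}$. The content lies in proving the reverse inclusion. Take $(u_1,u_2)\in\Gamma_\phi^{\bot,k}$; then for every $v_1\in\mathcal{V}_1$ and every $r$ one has $\omega^1_r(u_1,v_1)=\omega^2_r(u_2,\phi(v_1))$. Using the $k$-symplectomorphism hypothesis, I can rewrite the right-hand side as $\omega^1_r(\phi^{-1}(u_2),v_1)$, so $\omega^1_r(u_1-\phi^{-1}(u_2),v_1)=0$ for all $v_1$ and all $r$. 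The nondegeneracy condition $\bigcap_r\ker\omega^1_r=\{0\}$ then forces $u_2=\phi(u_1)$, i.e.\ $(u_1,u_2)\in\Gamma_\phi$.

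The main (and only real) obstacle is that the usual symplectic dimension-counting argument (where $\dim\Gamma_\phi$ is automatically half of $\dim(\mathcal{V}_1\times\mathcal{V}_2)$, forcing maximality) is \emph{not} available in the $k$-symplectic setting, as emphasized in the preceding remarks. This is precisely why one cannot conclude $k$-lagrangianity from isotropy plus dimension; instead one must verify $\Gamma_\phi^{\bot,k}\subset\Gamma_\phi$ by hand, and the key input that makes this verification succeed is the injectivity of the map $\mathsf{b}$ associated to the $k$-symplectic structure on $\mathcal{V}_1$.
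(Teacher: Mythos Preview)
Your proposal is correct and follows essentially the same route as the paper: both arguments show that $\phi$ being a $k$-symplectomorphism is equivalent to $\Gamma_\phi$ being $k$-isotropic, and then verify the reverse inclusion $\Gamma_\phi^{\bot,k}\subset\Gamma_\phi$ directly by exploiting the nondegeneracy condition $\bigcap_r\ker\omega_r=\{0\}$. The only cosmetic difference is that the paper subtracts the relation for $(x,\phi(x))\in\Gamma_\phi^{\bot,k}$ from the one for $(x,y)\in\Gamma_\phi^{\bot,k}$ and invokes nondegeneracy on $\mathcal{V}_2$, whereas you pull back via $\phi^{-1}$ and invoke it on $\mathcal{V}_1$; these are the same computation up to applying $\phi$.
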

    \begin{proof}
        We recall that
        {\small
        \begin{align*}
             \Gamma_\phi^{\bot, k}
            =&\{ (x,y)\in \mathcal{V}_1\times \mathcal{V}_2\,|\, (\omega^1_r\ominus\omega^2_r)((x,y),(v_1,\phi(v_1)))=0,\,\forall v_1\in \mathcal{V}_1,\; \forall r\}\\
            =&\{ (x,y)\in \mathcal{V}_1\times \mathcal{V}_2\,|\, \omega^1_r(x,v_1)=\omega^2_r(y,\phi(v_1)),\, \forall v_1\in \mathcal{V}_1,\, \forall r\in\{1,\ldots, k\}\}\,.
        \end{align*}}

        We now prove that $\phi$ is a symplectomorphism if and only if $\Gamma_\phi$ is a $k$-isotropic subspace.
        From Proposition \ref{charac_subspace}(iii) we know that the graph $\Gamma_\phi$ is a $k$-isotropic subspace if and only if
        \[
            (\omega^1_r\ominus\omega^2_r)((v_1,\phi(v_1)),(v_1',\phi(v_1')))=0\,,\; \forall r=1,\ldots, k\,,
        \] that is,
        {\small
        \[
            \omega^1_r(v_1,v_1')-\omega^2_r(\phi(v_1),\phi(v_1'))
            =\omega^1_r(v_1,v_1')-\phi^*\omega^2_r(v_1,v_1')=0,\; \forall r=1,\ldots, k\,,
        \]}which is equivalent to say that $\phi$ is a $k$-symplectomorphism.

        In addition, if $\Gamma_\phi$ is $k$-isotropic, it is also $k$-lagrangian. In fact, if $(x,y)\in \Gamma_\phi^{\bot, k}$, then we have
        \begin{equation}\label{aux1}
            \omega^1_r(x,v_1)=\omega^2_r(y,\phi(v_1)),\, \forall v_1\in\mathcal{V}_1,\, \forall r\in \{1,\ldots, k\}\,.
        \end{equation}
        On the other hand, $(x,\phi(x))\in \Gamma_\phi\subset\Gamma_\phi^{\bot, k}$, then
        \begin{equation}\label{aux2}
            \omega^1_r(x,v_1)=\omega^2_r(\phi(x),\phi(v_1)),\, \forall v_1\in\mathcal{V}_1,\, \forall r\in \{1,\ldots, k\}\,.
        \end{equation}

        Then from (\ref{aux1}) and (\ref{aux2}) we obtain that
        \[\omega^2(y-\phi(x),\phi(v_1))=0,\forall v_1\in \mathcal{V}_2,\;\forall r\in \{1,\ldots, k\}
        \]
        thus, as $\phi$ is an isomorphism this condition is equivalent to $y-\phi(x)\in \displaystyle\bigcap_{r=1}^k \ker\, \omega^2_r$ and as $(\mathcal{V}_2,\omega_1^2,\ldots, \omega^2_k)$ is a $k$-symplectic vector space, one obtains that $y=\phi(x)$ and therefore $\omega^1_r(x,v_1)=\phi^*\omega^2_r(x,v_1)$, i.e. $\phi$ is a $k$-symplectomorphism.
    \end{proof}
\end{subsection}

\begin{subsection}{Polarized $k$-symplectic vector space}
    It is well-know that in symplectic geometry, given any finite dimensional symplectic vector space $(V,\omega)$ and an arbitrary Lagrangian subspace $\mathcal{L}$ of $V$ (which always exists), one can construct a symplectic isomorphism between $(V,\omega)$ and $(\mathcal{L}\times \mathcal{L}^*,\omega_\mathcal{L})$, with $\omega_\mathcal{L}$ given as in  the example \ref{canonical_model}. This is not true in the general $k$-symplectic case. For instance, if we consider the example \ref{euclidean example} and the $1$-lagrangian subspace $V=span\{e_2,e_3\}$,  a trivial computation allows us to check that there is not a isomorphism between $(\mathbb{R}^3,\omega_1,\omega_2)$ and $(V\times V^*\times V^*,\omega^V_1,\omega^V_2)$, being $(\omega^V_1,\omega^V_2)$ the canonical $k$-symplectic structure given in the example \ref{canonical_model}, due to dimensional reasons.

    In this section we introduce a particular type of $k$-symplectic vector space of dimension $n(k+1)$ which are isomorphism to the canonical prototype $(V\times V^*\times\stackrel{k}{\cdots}\times V^*,\omega^V_1,\ldots, \omega^V_k)$ for some $V$ of dimension $n$.

    Consider the canonical $k$-symplectic structure
    \[
        (\mathcal{V}=V\times V^*\times\stackrel{k}{\cdots}\times V^*,\omega^V_1,\ldots,\omega^V_k)
    \]
    for some finite dimensional vector space $V$. Identifying $V$ and $V^*\times \stackrel{k}{\cdots}\times V^*$ with the subspaces $V\times \{0\}$ and $\{0\}\times V^*\times \stackrel{k}{\cdots}\times V^*$, respectively, we have the following property:
    \begin{lemma}\label{canonical lagrangian spaces}
        $V$ and $V^*\times \stackrel{k}{\cdots}\times V^*$ are complementary $k$-lagrangian subspaces of dimensions $\dim V$ and $k\dim V$, respectively.
    \end{lemma}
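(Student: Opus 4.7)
The plan is to reduce both $k$-lagrangian assertions to the equivalence established in Proposition \ref{charac_subspace}, namely that a subspace $W$ satisfies $W = W^{\bot, k}$ if and only if $W$ is $k$-lagrangian (items (iv) and (v)). Under the identifications $V \equiv V \times \{0\}$ and $V^* \times \stackrel{k}{\cdots} \times V^* \equiv \{0\} \times V^* \times \stackrel{k}{\cdots} \times V^*$, the complementarity $\mathcal{V} = V \oplus (V^* \times \stackrel{k}{\cdots} \times V^*)$ and the dimension formulas $\dim V = n$, $\dim(V^* \times \stackrel{k}{\cdots} \times V^*) = kn$ (with $n = \dim V$) are immediate from the product structure of $\mathcal{V}$, so the core of the argument reduces to two direct computations of the $k$-th orthogonal complement.

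For $V \times \{0\}$, I would take an arbitrary element $(v, \alpha_1, \ldots, \alpha_k) \in (V \times \{0\})^{\bot, k}$ and pair it against vectors of the form $(w, 0, \ldots, 0)$; by (\ref{canonical_omega}) the pairing equals $-\alpha_r(w)$, and vanishing for all $w \in V$ and all $r \in \{1, \ldots, k\}$ forces $\alpha_1 = \cdots = \alpha_k = 0$, yielding $(V \times \{0\})^{\bot, k} \subset V \times \{0\}$. The reverse inclusion is immediate from the same formula, since $\omega^V_r$ vanishes on any pair of vectors both of whose covector components are zero. For $\{0\} \times V^* \times \stackrel{k}{\cdots} \times V^*$ the computation is symmetric: pairing an element of the orthogonal complement against $(0, \beta_1, \ldots, \beta_k)$ gives $\beta_r(v)$, and vanishing for all $\beta_r \in V^*$ and all $r$ forces $v = 0$; the opposite inclusion is again obvious. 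In both cases we get $W = W^{\bot, k}$, so Proposition \ref{charac_subspace}(iv) finishes the job.

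There is no genuine obstacle here — the two computations are pure bookkeeping against the explicit formula (\ref{canonical_omega}). The only substantive point worth stressing is that the equivalence $W = W^{\bot, k} \Leftrightarrow W$ is $k$-lagrangian is specific to the top index $l = k$ (as the remark after Proposition \ref{charac_subspace} underlines), so one should take care to compute $(\,\cdot\,)^{\bot, k}$ rather than any lower-index orthogonal complement.
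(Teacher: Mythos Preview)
Your proposal is correct and essentially mirrors the paper's treatment: the paper states the lemma without a separate proof, having already recorded in the preceding example that $(V^{k*})^{\bot,l}=V^{k*}$ and $V^{\bot,k}=V$, from which the $k$-lagrangian claims follow exactly as you argue via Proposition~\ref{charac_subspace}(iv). Your explicit verification of both inclusions against formula~(\ref{canonical_omega}) is precisely the ``straightforward check'' the paper leaves to the reader.
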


    Putting $W=V^*\times \stackrel{k}{\cdots}\times V^*$, we note that
    \[
        \mathcal{V}/W=V
    \]
    which, in particular, yields
    \[
        \dim V=\dim (\mathcal{V}/W) \makebox{ and } \dim W=k\dim (\mathcal{V}/W)\,.
    \]
  Next, we introduce the following definition.
    \begin{definition}
        A $k$-symplectic vector space $(\mathcal{V},\omega_1,\ldots, \omega_k)$ is said to be polarized if there exist a $k$-lagrangian subpace $W$ of $\mathcal{V}$ such that $\dim W=k\dim\big (\mathcal{V}/W\big )^*$. We denote a polarized $k$-symplectic vector space by $(\mathcal{V},\omega_1,\ldots,\omega_k,W)$.
    \end{definition}
    \begin{remark}
        Here we use the term ``polarized'' by analogy with the symplectic case, where given a real symplectic vector space $(V,\Omega)$, one can extend $\Omega$ to a complex-bilinear form $\Omega_C$ on the complexification $V_C$. A \textit{polarization} of $V$ is a Lagrangian subspace of $V_C$ and given $L$ a lagrangian subspace of $V$, its complexification $L_C$ is a lagrangian subspace of $V_C$.
    \end{remark}
    \begin{proposition}
        Let $(\mathcal{V},\omega_1,\ldots,\omega_k,W)$ be a polarized $k$-symplectic vector space. Then, there exists a $k$-lagrangian subspace $V$ which is complementary to $W$, i.e. such that $\mathcal{V}=V\oplus W$.
    \end{proposition}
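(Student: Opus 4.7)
The plan is to build $V$ by an inductive procedure: at step $i$ I produce a $k$-isotropic subspace $V_i\subset\mathcal{V}$ of dimension $i$ with $V_i\cap W=\{0\}$, and I stop at $i=n:=\dim(\mathcal{V}/W)$. Once $V:=V_n$ is obtained, a dimension count forces $V\oplus W=\mathcal{V}$; since $W$ is $k$-isotropic (being $k$-lagrangian), $V$ is then $k$-lagrangian by Definition \ref{subspaces}(iii). The whole content therefore lies in the inductive step.

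The key auxiliary fact I would establish first is that, for \emph{any} complement $V_0$ of $W$ in $\mathcal{V}$, the linear map
\[
    \Phi\colon W\longrightarrow V_0^*\times\stackrel{k}{\cdots}\times V_0^*,\qquad w\longmapsto\bigl(\omega_1(\cdot,w)\vert_{V_0},\ldots,\omega_k(\cdot,w)\vert_{V_0}\bigr),
\]
is a linear isomorphism. Both spaces have dimension $kn$ by the polarization hypothesis, so it suffices to prove injectivity: if $w\in\ker\Phi$ then $\omega_r(v,w)=0$ for all $v\in V_0$ and all $r$; combined with $\omega_r(w',w)=0$ for all $w'\in W$, which holds since $W$ is $k$-isotropic, this gives $w\in\mathcal{V}^{\bot,k}=\{0\}$ by Proposition \ref{properties orthogonal}(ii.d).

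For the inductive step, assume $V_i$ is $k$-isotropic with $V_i\cap W=\{0\}$ and $i<n$, and extend $V_i$ to a complement $V_0$ of $W$. Composing $\Phi$ with the restriction $V_0^*\times\stackrel{k}{\cdots}\times V_0^*\to V_i^*\times\stackrel{k}{\cdots}\times V_i^*$ (surjective because $V_i\hookrightarrow V_0$) shows that the map $\mathcal{V}\to V_i^*\times\stackrel{k}{\cdots}\times V_i^*$, $v\mapsto(\omega_r(v,\cdot)\vert_{V_i})_r$, whose kernel is by definition $V_i^{\bot,k}$, is itself surjective. Hence $\dim V_i^{\bot,k}=n(k+1)-ki$. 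Next, any element of $V_i^{\bot,k}\cap(V_i+W)$ splits as $v+w$ with $v\in V_i\subset V_i^{\bot,k}$ (since $V_i$ is $k$-isotropic), which forces $w\in V_i^{\bot,k}\cap W$; applying the same restriction argument to $\Phi$ yields $\dim(V_i^{\bot,k}\cap W)=kn-ki$, so $\dim\bigl(V_i^{\bot,k}\cap(V_i+W)\bigr)=i+k(n-i)$.

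Comparing, the codimension of $V_i^{\bot,k}\cap(V_i+W)$ inside $V_i^{\bot,k}$ equals $n-i>0$, so I can pick $v_{i+1}\in V_i^{\bot,k}\setminus(V_i+W)$. Setting $V_{i+1}:=V_i+span\{v_{i+1}\}$, the subspace $V_{i+1}$ is $k$-isotropic because $v_{i+1}\in V_i^{\bot,k}$, $V_i$ is itself $k$-isotropic, and $\omega_r(v_{i+1},v_{i+1})=0$ by skew-symmetry; and it still satisfies $V_{i+1}\cap W=\{0\}$ by the very choice of $v_{i+1}$. Iterating $n$ times yields the required $V$. The main obstacle is the dimension identity for $V_i^{\bot,k}$ and its intersection with $V_i+W$; once the surjectivity of $\Phi$ is in hand, everything else is a direct dimension count.
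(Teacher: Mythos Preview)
Your proof is correct, but it works much harder than necessary because you seem to have overlooked the precise form of Definition~\ref{subspaces}(iii). In this paper, a subspace $W$ is declared $k$-lagrangian not by the condition $W=W^{\bot,k}$, but by the requirement that $W$ be $k$-isotropic \emph{and} that there exist a $k$-isotropic complement $U$ with $\mathcal{V}=U\oplus W$. Consequently, the paper's own proof is literally one line: take $V:=U$ as supplied by the definition; since $W$ is a $k$-isotropic complement of $U$, the same definition makes $U$ itself $k$-lagrangian.

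Your inductive construction, by contrast, essentially reproves the harder direction of Proposition~\ref{charac_subspace}(iv)--(v) adapted to the polarized situation: starting only from $W=W^{\bot,k}$ and the dimension constraint $\dim W=k\dim(\mathcal{V}/W)$, you build a $k$-isotropic complement by hand. The argument is sound---the surjectivity of your map $\Phi$ is exactly what the polarization hypothesis buys, and the dimension counts for $V_i^{\bot,k}$ and $V_i^{\bot,k}\cap(V_i+W)$ are correct---so this would be the right proof had the paper adopted the characterization $W=W^{\bot,k}$ as its \emph{definition} of $k$-lagrangian. Given the definition actually in force, however, the proposition is tautological, and your machinery, while instructive, is not needed here.
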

    \begin{proof}
        It is a direct consequence of the definition of $k$-lagrangian subspace.
    \end{proof}
    \begin{proposition}\label{darboux lineal}
        Let $(\mathcal{V},\omega_1,\ldots,\omega_k)$ a $k$-symplectic vector space. Then $(\mathcal{V},\omega_1,\ldots,\omega_k)$ is $k$-symplectomorphic to a canonical $k$-symplectic vector space if and only if there exists a $k$-lagrangian subspace $W$ of $\mathcal{V}$ such that $\dim W=k\dim\big (\mathcal{V}/W\big )^*$.
    \end{proposition}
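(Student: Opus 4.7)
The plan is to prove the two directions separately, with the forward direction being essentially a bookkeeping observation and the reverse direction requiring an explicit construction.

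For the easy direction, I would argue as follows. Suppose there is a $k$-symplectomorphism $F\colon\mathcal{V}\to V\times V^*\times\stackrel{k}{\cdots}\times V^*$ for some vector space $V$. By Lemma \ref{canonical lagrangian spaces}, the subspace $\{0\}\times V^*\times\stackrel{k}{\cdots}\times V^*$ is $k$-lagrangian in the canonical model, and its preimage $W:=F^{-1}(\{0\}\times V^*\times\stackrel{k}{\cdots}\times V^*)$ is therefore a $k$-lagrangian subspace of $\mathcal{V}$. Moreover, $F$ induces an isomorphism between $\mathcal{V}/W$ and the factor $V$, so $\dim W=k\dim V=k\dim(\mathcal{V}/W)=k\dim(\mathcal{V}/W)^*$.

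The substantive direction is the converse. Assume the existence of such a $k$-lagrangian $W$, and set $n=\dim(\mathcal{V}/W)$, so that $\dim W=kn$ and $\dim\mathcal{V}=n(k+1)$. By the previous proposition we may choose a $k$-lagrangian subspace $V\subset\mathcal{V}$ complementary to $W$, i.e.\ $\mathcal{V}=V\oplus W$ with $\dim V=n$. For each $r\in\{1,\ldots,k\}$ I would define $\phi_r\colon W\to V^*$ by $\phi_r(w)(v)=\omega_r(v,w)$, and then form $\Phi\colon W\to V^*\times\stackrel{k}{\cdots}\times V^*$ by $\Phi(w)=(\phi_1(w),\ldots,\phi_k(w))$. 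The key lemma is that $\Phi$ is a linear isomorphism. For injectivity: if $\Phi(w)=0$ then $\omega_r(v,w)=0$ for all $v\in V$ and all $r$; since $W$ is $k$-lagrangian we have $W=W^{\bot,k}$ by Proposition \ref{charac_subspace}(v), so also $\omega_r(w',w)=0$ for all $w'\in W$, giving $w\in\bigcap_r\ker\omega_r=\{0\}$. A dimension count $\dim W=kn=\dim(V^*)^k$ then upgrades injectivity to bijectivity.

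Finally, I would combine these into a global isomorphism $F\colon\mathcal{V}\to V\times V^*\times\stackrel{k}{\cdots}\times V^*$ defined on $u=v+w$ by $F(v+w)=(v,\phi_1(w),\ldots,\phi_k(w))$, and verify $F^*\omega^V_r=\omega_r$ for each $r$. Writing $u_i=v_i+w_i$ and expanding $\omega_r(u_1,u_2)$ by bilinearity, both $V$ and $W$ being $k$-lagrangian (hence $k$-isotropic) kills the $\omega_r(v_1,v_2)$ and $\omega_r(w_1,w_2)$ terms, leaving
\[
   \omega_r(u_1,u_2)=\omega_r(v_1,w_2)+\omega_r(w_1,v_2)=\phi_r(w_2)(v_1)-\phi_r(w_1)(v_2),
\]
which is exactly $\omega^V_r(F(u_1),F(u_2))$ by \eqref{canonical_omega}. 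The main (small) obstacle is recognising that one needs \emph{both} $V$ and $W$ to be $k$-isotropic for the cross-terms to match the canonical expression; this is precisely why one invokes the preceding proposition to secure the complementary $k$-lagrangian $V$, rather than just any complementary subspace.
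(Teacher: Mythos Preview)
Your proposal is correct and follows essentially the same approach as the paper's proof: both directions are handled identically, with the converse constructed via the map $w\mapsto(-(\imath_w\omega_1)\vert_V,\ldots,-(\imath_w\omega_k)\vert_V)$ (which, up to the sign convention $-(\imath_w\omega_r)(v)=\omega_r(v,w)$, is your $\Phi$), injectivity deduced from the $k$-isotropy of $W$ together with the nondegeneracy condition~\eqref{nondeg_cond}, and the pullback identity verified by expanding over $\mathcal{V}=V\oplus W$ and using the $k$-isotropy of both summands. If anything, your write-up is slightly more explicit about why both $V$ and $W$ must be $k$-isotropic for the final computation to go through.
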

    \begin{proof}
        If for some vector space $V$,  there exists a $k$-symplectic isomorphism between $(\mathcal{V},\omega_1,\ldots,\omega_k)$ and $ (V\times V^*\times\stackrel{k}{\cdots}\times V^*,\omega^V_1,\ldots,\omega^V_k)$, then we consider the subspace $W$ associated to $V^*\times\stackrel{k}{\cdots}\times V^*$ via this isomorphism. It is trivial that this subspace satisfies the properties stated in the proposition.

        We now assume that the conditions of the proposition hold for some subspace $W$ of $\mathcal{V}$, i.e., we assume that $(\mathcal{V},\omega_1,\ldots,\omega_k)$ is a polarized $k$-symplectic vector space. According to the previous proposition, there exists a $k$-lagrangian subspace $V$ such that $\mathcal{V}=V\oplus W$ and $\dim V=\dim \big (\mathcal{V}/W\big)$. Then we define then the following linear mapping
        \[
            \begin{array}{rccl}
                \phi\colon & W & \to & V^*\times \stackrel{k}{\cdots}\times V^*\\\noalign{\medskip}
                & w &\mapsto & (-(\imath_w\omega_1)\vert_V,\ldots, -(\imath_w\omega_k)\vert_V)
            \end{array}
        \]

        Using the $k$-isotropic character of $W$ and that $(\omega_1,\ldots, \omega_k)$ define a $k$-symplectic structure it is easy to check that $\phi$ is injective and, since the dimensionality assumptions, we deduce that $\phi$ is in fact a linear isomorphism.

        Next, we define the mapping
        \[
            \begin{array}{rccl}
                \psi\colon & \mathcal{V}=V\oplus W & \to &V\times V^*\times \stackrel{k}{\cdots}\times V^*\\\noalign{\medskip}
                & (v,w) &\mapsto & (v,\phi(w))
            \end{array}
        \]
        which is also a linear isomorphism such that $\psi^*\omega_r^V=\omega_r$. In fact,
        \begin{align*}
          & \big(\psi^*\omega^V_r\big)\big((v,w),(\tilde{v},\tilde{w})\big) = \omega^V_r\big((v,\phi(w)),(\tilde{v},\phi(\tilde{w}))\big)\\
          =& \omega^V_r\big((v,-(\imath_w\omega_1)\vert_V,\ldots, -(\imath_w\omega_k)\vert_V), (\tilde{v},-(\imath_{\tilde{w}}\omega_1)\vert_V,\ldots, -(\imath_{\tilde{w}}\omega_k)\vert_V) \big)\\
          =& -\imath_{\tilde{w}}\omega_r(\tilde{v})+\imath_w\omega_r(v) = \omega_r\big ((v,w),(\tilde{v},\tilde{w})\big)\,.
        \end{align*}
    \end{proof}
    \begin{remark}
        A direct application of the Proposition \ref{darboux lineal} shows that given a polarized $k$-symplectic vector space $(\mathcal{V},\omega_1,\ldots, \omega_r,W)$ there exist a basis (Darboux basis) $\{e_1,\ldots, e_n,f^1_1,\ldots, f^k_1,\ldots f^1_n,\ldots, f^k_n\}$ such that $\{e_i\}$ is a basis of $V$ and $\{f^r_i\}$ is a basis of $W$. Moreover
        \[
            \omega_r=e_i^*\wedge (f^r_i)^*
        \]
        being $\{e_i^*\}$ and $\{(f^r_i)^*\}$ the dual basis of $\{e_i\}$ and $\{f^r_i\}$, respectively.
    \end{remark}

\end{subsection}
\section{$k$-symplectic manifolds}
    We turn now to the globalization of the ideas of the previous section to $k$-symplectic manifolds.

    \begin{definition}
        A $k$-symplectic manifold $(M,\omega_1,\ldots, \omega_k)$ is a family consisting of a manifold $M$ of dimension $n(k+1)$ equipped with a family of $k$ closed $2$-forms $(\omega_1,\ldots, \omega_k)$ such that  $(T_xM,\omega_1(x),\ldots, \omega_k(x))$ is a $k$-symplectic vector space for all $x\in M$. The family $(\omega_1,\ldots, \omega_k)$ is called $k$-symplectic structure.
    \end{definition}
    \begin{example}
        Let $(T^1_k)^*Q$ be the cotangent bundle of $k^1$-covelocities, i.e. the Whitney sum of $k$ copies of the cotangent bundle of a differentiable manifold $Q$, and denote by $\pi^{k,r}_Q\colon (T^1_k)^*Q\to T^*Q$ the canonical projection over the $r$-copy of the cotangent bundle. We define the family of canonical $2$-forms $(\Omega^Q_1,\ldots, \Omega^Q_k)$ as follows:
        \[
            \Omega^Q_r=(\pi^{k,r}_Q)^*\omega
        \]
        being $\omega$ the canonical symplectic form of $T^*Q$.

        A direct computation shows that $((T^1_k)^*Q,\Omega_1,\ldots, \Omega_k)$ is a $k$-symplec\-tic manifold.

        We recall that the canonical symplectic form is defined as $\omega=-d\theta_0$, where $\theta_0$ is the canonical form or Liouville form on $T^*Q$ (see for instance \cite{AM-1978}). Thus, every $\Omega^Q_r$ can be written as $\Omega^Q_r=-d\Theta^Q_r$, being $\Theta^Q_r=(\pi^{k,r}_Q)^*\theta_0$.
    \end{example}
    \begin{remark}\label{relation_canonical_models}
        For each $\alpha_x\in (T^1_k)^*Q= T^*Q\oplus\stackrel{k}{\cdots}\oplus T^*Q$, the $k$-symplectic vector space $(T_{\alpha_x}(T^*Q\oplus\stackrel{k}{\cdots}\oplus T^*Q),\Omega^Q_1(\alpha_x),\ldots, \Omega^Q_k(\alpha_x))$ associated to the $k$-symplectic manifold $((T^1_k)^*Q,\Omega^Q_1,\ldots, \Omega^Q_k)$ is related with the canonical $k$-symplectic structure on $T_xQ\times T_x^*Q\times\stackrel{k}{\cdots}\times T_x^*Q$ described in example \ref{canonical_model} with $V=T_xQ$. In fact, for each $\alpha_x\in T^*Q\oplus\stackrel{k}{\cdots} T^*Q$, we consider the following vector space isomorphism 
        \[
            \begin{array}{rcl}
                T_{\alpha_x}(T^*Q\oplus\stackrel{k}{\cdots}\oplus T^*Q) & \stackrel{\Psi_{\alpha_x}}{\longrightarrow} & T_xQ\times T_x^*Q\times\stackrel{k}{\ldots}\times T_x^*Q\\\noalign{\medskip}
                Z_{\alpha_x} & \mapsto & ((\pi^k_Q)_*(\alpha_x)(Z_{\alpha_x}), \beta^1_x,\ldots,\beta^k_x)
            \end{array}
        \]
        where $\beta^r_x\in T^*_xQ$ is the covector such that
        \[
             (\beta^r_x)^{(r)}=(\pi^{k,r}_Q)_*(\alpha_x)(Z_{\alpha_x})\in T_{\alpha^r_x}(T^*Q)
        \] being $(\beta^r_x)^{(s)}$ the $s$-vertical lift of $\beta^r_x$  to $T^*Q\oplus\stackrel{k}{\cdots}\oplus T^*Q$ defined in \cite{LMS-1988}. We now recall this definition:  
            \begin{quote}
                Consider the following commutative diagram:
                \[
                    \xymatrix{
                        T^*Q\oplus\stackrel{k}{\cdots}\oplus T^*Q\ar[d]_-{\pi^k_Q}\ar[r]^-{\pi^{k,r}_Q} & T^*Q \ar[dl]^-{\pi_Q}\\
                        Q
                    }
                \]

                Suppose that $\alpha_x\in T^*Q\oplus\stackrel{k}{\cdots}\oplus T^*Q$, $\pi^k_Q(\alpha_x)=x$, $ \pi^{k,r}_Q(\alpha_x)=\alpha_x^r$ and that $\beta\in T^*_xQ$. The $(s)$-vertical lift of $\beta$ to $T^*Q\oplus\stackrel{k}{\cdots}\oplus T^*Q$ is the unique tangent vector $\beta^{(s)}\in V_r$, such that
                \[
                    \imath_{(\pi^{k,s}_Q)_*(\alpha_x)(\beta^{(s)})}\omega=(\pi_Q)^*\beta
                \]
                being $\omega$ the canonical symplectic form on $T^*Q$.

                Locally, we consider a local coordinate system $(q^i,p^r_i)$ on $T^*Q\oplus\stackrel{k}{\cdots}\oplus T^*Q$. If $\beta=\beta_idq^i$, then we have
                \[
                    \beta^{(s)}=\beta_i\displaystyle\frac{\partial}{\partial p^s_i}.
                \]
            \end{quote}

        Moreover, if $(\Omega^Q_1(\alpha_x),\ldots, \Omega^Q_k(\alpha_x))$ is the canonical $k$-symplectic structure on the vector space $T_{\alpha_x}(T^*Q\oplus\stackrel{k}{\ldots}\oplus T^*Q)$ and $(\omega^{T_xQ}_1,\ldots, \omega^{T_xQ}_k)$ the canonical $k$-symplectic structure on  $T_xQ\times T^*_xQ\times \stackrel{k}{\ldots}\times T^*_xQ$ introduced in the example \ref{canonical_model}, then
        \begin{equation}\label{relation}
            \left(\Psi_{\alpha_x}\right)^*\left(\omega^{T_xQ}_r\right) = \Omega^Q_r(\alpha_x),\; \forall r\in \{1,\ldots, r\}.
        \end{equation}
    \end{remark}

\subsection{Isotropic, coisotropic and Lagrangian submanifolds}

    Following the notion of special submanifolds in the symplectic case we can give the following definition:
    \begin{definition}\label{special}
        Let $N$ be a submanifold of a $k$-symplectic manifold $(M,\omega_1,\ldots, \omega_k)$. $N$ is said to be $l$-isotropic (resp. $l$-coisotropic, $l$-lagrangian)  if $T_xN$ is a $l$-isotropic (resp. $l$-coisotropic, $l$-lagrangian) vector subspace of the $k$-symplectic vector space $(T_xM,\omega_1(x),\ldots, \omega_k(x))$ for all $x\in N$.
    \end{definition}

    \begin{proposition}\label{fibers}\
        \begin{enumerate}
            \item The fibers of $\pi^k_Q\colon T^*Q\oplus\stackrel{k}{\cdots}\oplus T^*Q\to Q$ are $k$-lagrangian.
            \item The image of a section $\gamma$ of $\pi^k_Q$ is $k$-lagrangian if and only if $\gamma$ is a closed section.
        \end{enumerate}
    \end{proposition}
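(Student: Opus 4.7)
For part (i), my approach is to use the $k$-symplectic linear isomorphism $\Psi_{\alpha_x}$ of Remark \ref{relation_canonical_models} to transport the question to the canonical model. I would first identify the tangent space to the fiber at $\alpha_x$ with the vertical subspace $V_{\alpha_x}=\ker(\pi^k_Q)_*(\alpha_x)$. Since vertical vectors satisfy $(\pi^k_Q)_*(\alpha_x)(Z_{\alpha_x})=0$, their images under $\Psi_{\alpha_x}$ lie in $\{0\}\times T^*_xQ\times\stackrel{k}{\cdots}\times T^*_xQ$; matching dimensions yields equality, and invoking the vertical lifts of arbitrary covectors confirms surjectivity onto that subspace. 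By Lemma \ref{canonical lagrangian spaces} the target is $k$-lagrangian in the canonical $k$-symplectic vector space, and the identity (\ref{relation}) then transports the property back to $V_{\alpha_x}$ inside $(T_{\alpha_x}(T^1_k)^*Q,\Omega^Q_1(\alpha_x),\ldots,\Omega^Q_k(\alpha_x))$.

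For part (ii), I would write $\gamma=(\gamma_1,\ldots,\gamma_k)$, where each $\gamma_r=\pi^{k,r}_Q\circ\gamma$ is a $1$-form on $Q$, and perform the key computation
\[
    \gamma^*\Omega^Q_r = -\gamma^*(\pi^{k,r}_Q)^*d\theta_0 = -\gamma_r^*d\theta_0 = -d\gamma_r,
\]
using the tautological property $\gamma_r^*\theta_0=\gamma_r$ of the Liouville form. By Proposition \ref{charac_subspace}(iii), the image of $\gamma$ is $k$-isotropic if and only if $d\gamma_r=0$ for every $r$, that is, if and only if $\gamma$ is a closed section. To upgrade $k$-isotropy to $k$-lagrangian in the ``if'' direction, I would observe that $\gamma$ being a section forces $T_{\gamma(x)}\gamma(Q)$ to project isomorphically onto $T_xQ$ under $(\pi^k_Q)_*$, hence to be transverse of complementary dimension to the vertical space $V_{\gamma(x)}$, giving the decomposition
\[
    T_{\gamma(x)}(T^1_k)^*Q = T_{\gamma(x)}\gamma(Q)\oplus V_{\gamma(x)}.
\]
Since $V_{\gamma(x)}$ is $k$-isotropic by (i), the complementary $k$-isotropic subspace required in Definition \ref{subspaces}(iii) is at hand. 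The converse is immediate because every $k$-lagrangian subspace is $k$-isotropic.

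The main obstacle I expect is the careful bookkeeping in (i): one has to unwind the definition of $\Psi_{\alpha_x}$ and of the $s$-vertical lift to confirm that the vertical subspace corresponds bijectively to tuples of the form $(0,\beta^1_x,\ldots,\beta^k_x)$. Once this identification is under control, the remaining arguments are formal applications of the canonical-model results, the characterization in Proposition \ref{charac_subspace}(iii), and the transversality built into the notion of a section.
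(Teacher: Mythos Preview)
Your proposal is correct and follows essentially the same approach as the paper: for (i) you use the isomorphism $\Psi_{\alpha_x}$ of Remark~\ref{relation_canonical_models} to identify the vertical subspace with $\{0\}\times T^*_xQ\times\stackrel{k}{\cdots}\times T^*_xQ$ and then invoke Lemma~\ref{canonical lagrangian spaces}, exactly as the paper does; for (ii) both you and the paper compute $\gamma^*\Omega^Q_r=-d\gamma_r$ via the tautological property of $\theta_0$, apply Proposition~\ref{charac_subspace}(iii) to get the equivalence with $k$-isotropy, and then use the fibres from part (i) as the complementary $k$-isotropic subspace to upgrade to $k$-lagrangian. Your write-up is slightly more explicit about the transversality of a section to the fibres and about why $\Psi_{\alpha_x}$ carries the vertical space onto the right factor, but these are just details the paper leaves implicit.
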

    \begin{proof}
        (i)\; In the above remark we recall that at each point $\alpha_x\in (T^1_k)^*Q$ we have
        \[
            T_{\alpha_x}(T^*Q\oplus\stackrel{k}{\cdots}\oplus T^*Q) \cong T_xQ\times T_x^*Q\times\stackrel{k}{\ldots}\times T_x^*Q\,.
        \]

        On the other hand, for the tangent space to the fibre through $\alpha_x$ we deduce
        \[
            T_{\alpha_x}\big ( (\pi^k_Q)^{-1}(\pi^k_Q(\alpha_x))\big)\cong T_x^*Q\times\stackrel{k}{\ldots}\times T_x^*Q
        \]
         and, hence, it follows from Lemma \ref{canonical lagrangian spaces} that this is a $k$-lagrangian subspace of $T_{\alpha_x}(T^*Q\oplus\stackrel{k}{\cdots}\oplus T^*Q)$.

         (ii)\; Now we prove that $L$ is a $k$-isotropic  submanifold if and only if $\gamma$ is a closed section of $\pi^k_Q$.
        Let us observe that if $\gamma$ is a section of $\pi^k_Q$, then there exist a family of $k$ $1$-forms on $Q$ such that $\gamma =(\gamma_1,\ldots, \gamma_k)$. A section $\gamma$ satisfies
         \begin{equation}\label{aux3}
            \gamma^*\Theta^Q_r=\gamma^*\left( (\pi^{k,r}_Q)^*\theta_0\right) = \gamma_r^*\theta_0=\gamma_r,
          \end{equation}
          where in the last identity we use the following universal property of $\theta_0$:
            \begin{quote}
                \textit{The canonical form $\theta_0$ on $T^*Q$ is the unique one-form with the property that, for any one-form $\beta$ on $Q$, $\beta^*\theta_0=\beta$.}
            \end{quote}
          From (\ref{aux3}) one obtains that $d\gamma_r=\gamma^*d\Theta^Q_r=-\gamma^*\Omega^Q_r$. Thus $\gamma$ is a closed section, i.e., each $\gamma_r$ is a closed $1$-form, if and only if $\gamma^*\Omega^Q_r=0$, for all $r \in \{1,\ldots, k\}$, or equivalently $\Omega^Q_r\vert_{\gamma(Q) \times \gamma(Q)}=0$ for all $r\in \{1,\ldots, k\}$. Finally, by the Proposition \ref{charac_subspace}(iii) we know that it is equivalent to say that $\gamma(Q)$ is $k$-isotropic, but in this particular case there exist a $k$-isotropic complement given by the fibers of the projection, thus, $L=\gamma(Q)$ is a $k$-lagrangian submanifold.
    \end{proof}

    In particular, note that the zero section of $\pi^k_Q$ is a $k$-lagrangian submanifold.
    \begin{remark} 
        In the previous proposition, since $\gamma$ is closed, i.e., each $\gamma_r$ is closed, for each $r$, we have that every point has an open neighborhood $U\subset Q$ where there exist $k$ functions $W_r\in \mathcal{C}^{\infty}(U)$ such that $\gamma_r=dW_r$. This functions $W_1,\ldots, W_k$ are called the \textit{characteristic functions}. The idea of characteristic functions go back to Hamilton-Jacobi theory. In the $k$-symplectic framework, given a Hamiltonian function $H\in\mathcal{C}^{\infty}(T^*Q\oplus\stackrel{k}{\cdots}\oplus T^*Q)$, the Hamilton-Jacobi problem consists of finding $k$ function $W_1,\ldots, W_k\colon U\subset Q\to \R$ such  that
            \[
                H\left(q^i,\displaystyle\frac{\partial W_1}{\partial q^i},\ldots, \displaystyle\frac{\partial W_k}{\partial q^i}\right)=\makebox{\rm constant}.
            \]

        In \cite{LMMSV-2010}, we give the geometric version of this equation as follow: let $\gamma=(\gamma_1,\ldots, \gamma_k)$ a closed section of $\pi^k_Q\colon T^*Q\oplus\stackrel{k}{\cdots}\oplus T^*Q\to Q$  with $\gamma_r=dW_r$. If $d(H\circ \gamma)=0$, then $W_1,\ldots, W_k$ is a solution of the Hamilton-Jacobi problem in the $k$-symplectic approach.
    \end{remark}

        As in \ref{canonical_relation} and \ref{graph} one has
    \begin{proposition}\label{canonical_relation2}
        Let $(M_1,\omega^1_1,\ldots,\omega^1_k)$ and $(M_2,\omega^2_1,\ldots,\omega^2_k)$ be two $k$-symplectic manifolds, $\pi_i\colon M_1\times M_2\to M_i$ the canonical projection onto $M_i,\, i=1,2$ and for each $r\in \{1,\ldots, k\}$
        \[
            \omega^1_r\ominus\omega^2_r=\pi_1^*\omega^1_r-\pi_2^*\omega^2_r\,.
        \]
        Then:
        \begin{enumerate}
            \item $(\omega^1_1\ominus\omega^2_1,\ldots, \omega^1_k\ominus\omega^2_k)$ is a $k$-symplectic structure on $M_1\times M_2$.
            \item A map $\phi\colon M_1\to M_2$ is a $k$-symplectomorphism if and only if its graph $\Gamma_\phi$ is a $k$-lagrangian submanifold.
        \end{enumerate}
    \end{proposition}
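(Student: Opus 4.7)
The proof of this proposition is essentially a pointwise application of the linear analogues, Propositions \ref{canonical_relation} and \ref{graph}, combined with one smoothness/closedness check. My plan is to separate the two claims and in each case exploit the canonical identification
\[
T_{(x_1,x_2)}(M_1\times M_2) \cong T_{x_1}M_1\oplus T_{x_2}M_2,
\]
under which $(\pi_i)_*$ at $(x_1,x_2)$ is just the projection onto the $i$-th summand.

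For (i), I would first note that $\omega^1_r\ominus\omega^2_r=\pi_1^*\omega^1_r-\pi_2^*\omega^2_r$ is a $2$-form, and it is closed because $d$ commutes with pullback and each $\omega^i_r$ is closed. Next I must check that $(\omega^1_1\ominus\omega^2_1,\ldots,\omega^1_k\ominus\omega^2_k)$ defines a $k$-symplectic structure on the tangent space at every point. Under the above identification, the bilinear form $(\omega^1_r\ominus\omega^2_r)(x_1,x_2)$ on $T_{x_1}M_1\oplus T_{x_2}M_2$ is exactly the form built in Proposition \ref{canonical_relation} from the $k$-symplectic vector spaces $(T_{x_1}M_1,\omega^1_r(x_1))$ and $(T_{x_2}M_2,\omega^2_r(x_2))$. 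Hence the non-degeneracy condition $\bigcap_r \ker(\omega^1_r\ominus\omega^2_r)(x_1,x_2)=\{0\}$ follows from that proposition, and the dimension count is automatic: if $\dim M_i = n(k+1)$, then $\dim(M_1\times M_2) = 2n(k+1) = (2n)(k+1)$.

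For (ii), the key observation is that for every $x_1\in M_1$,
\[
T_{(x_1,\phi(x_1))}\Gamma_\phi \;=\; \{(v_1,\phi_*|_{x_1}(v_1)) : v_1\in T_{x_1}M_1\} \;=\; \Gamma_{\phi_*|_{x_1}},
\]
i.e.\ the tangent space to the graph is the graph of the differential, viewed as a subspace of the $k$-symplectic vector space $(T_{x_1}M_1\oplus T_{\phi(x_1)}M_2,\omega^1_1\ominus\omega^2_1,\ldots,\omega^1_k\ominus\omega^2_k)$ constructed in (i). By Definition \ref{special}, $\Gamma_\phi$ is $k$-lagrangian precisely when each such tangent space is a $k$-lagrangian subspace. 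Applying Proposition \ref{graph} pointwise, this is equivalent to $\phi_*|_{x_1}$ being a $k$-symplectic linear isomorphism for every $x_1$, i.e.\ $(\phi_*|_{x_1})^*\omega^2_r(\phi(x_1)) = \omega^1_r(x_1)$ for every $r$, which is exactly the condition $\phi^*\omega^2_r=\omega^1_r$.

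There is no real obstacle here: the whole argument reduces the two claims to their linear counterparts already established in Propositions \ref{canonical_relation} and \ref{graph}, and the only additional ingredient beyond pointwise application is the trivial fact that pullback of closed forms is closed. The mild technical point to state carefully is the identification of $T\Gamma_\phi$ with the graph of $d\phi$, so I would make that identification explicit before invoking Proposition \ref{graph}.
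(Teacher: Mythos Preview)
Your proposal is correct and follows essentially the same approach as the paper: both reduce the claims to the linear analogues (Propositions \ref{canonical_relation} and \ref{graph}) via the identification $T_{(x_1,\phi(x_1))}\Gamma_\phi=\mathrm{graph}(\phi_*(x_1))$, and then invoke Definition \ref{special}. If anything, your treatment of (i) is more explicit than the paper's, which simply declares it ``easy to prove'' without writing out the closedness and dimension checks.
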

    \begin{proof}
        It is easy to prove (i). To prove (ii), note that $\phi$ induce a diffeomorphism of $M_1$ to
        \[
            \Gamma_\phi=\{ (x,\phi(x))\, | \, x\in M_1\},
        \]
        so we can write
        \[
            T_{(x,\phi(x))}\Gamma_\phi=\left\{
            (v_x,\phi_*(x)(v_x)) \, | \, v_x\in T_xM_1
            \right\} = graph( \phi_*(x))\,.
        \]

 \noindent
 $\phi$ is a $k$-symplectomorphism if and only if $\phi^*\omega^2_r=\omega^1_r$ for all $r\in \{1,\ldots, k\}$, i.e., for each $x\in M_1$, the linear isomorphism \[
         \phi_*(x)\colon T_xM_1\to T_{\phi(x)}M_2\] is a $k$-symplectomorphism; now by Proposition \ref{graph} that this is equivalent to the fact that $graph (\phi_*(x))$ is a $k$-lagrangian vector subspace of $(T_xM_1\times T_{\phi(x)}M_2,\omega^1_1(x)\ominus\omega^2_1(\phi(x)),\ldots, \omega^1_k(x)\ominus\omega^2_k(\phi(x)))$. Finally, by definition \ref{subspaces}, we know that it is equivalent to say that $\Gamma_\phi$ is a $k$-lagrangian submanifold of $M_1\times M_2$.
    \end{proof}

    Proposition \ref{fibers} tell us that the $k$-symplectic manifold $((T^1_k)^*Q,\omega_1,\ldots,$ $ \omega_k)$ possesses a $k$-lagrangian foliation\footnote{ A $k$-lagrangian foliation is a foliation such that its  leaves are $k$-lagrangian submanifolds.} with a transversal $k$-lagrangian section. Moreover, the leaves of this foliation all have the same dimension, namely
    \[
        \dim (\pi^{k}_Q)^{-1}(x) = \dim T^*_xQ\times \stackrel{k}{\cdots}\times T^*_xQ\,.
    \]

    These observations prompt us to introduce the following definition.
    \begin{definition}
        A family $(M,\omega_1,\ldots, \omega_k,W)$ consisting of a $k$-symplec\-tic manifold $(M,\omega_1,\ldots, \omega_k)$ of dimension $n(k+1)$ and $W$ is a $k$-lagrangian involutive $n$-codimensional distribution on  $(M,\omega_1,\ldots, \omega_k)$, i.e.  for each $x\in M$, $W(x)$ is a $k$-lagrangian subspace of $T_xM$, is called a polarized $k$-symplectic manifold. We denote by $\mathcal{F}$ the foliation defined by the subbundle $W$.
    \end{definition}

    A big difference between symplectic and $k$-symplectic manifolds is that in the $k$-symplectic case a theorem type Darboux is only valid for the polarized $k$-symplectic manifolds. This theorem has been proved in \cite{Aw-1992,{LMS-1993}}. We now recall this theorem
    \begin{theorem}\textbf{(Darboux theorem)}
        Let $(M,\omega_1,\ldots, \omega_k,W)$ be a polarized $k$-symplectic manifold. About every point of $M$ we can find a local coordinate system $(x^i,y^r_i),\, 1\leq i\leq n,\, 1\leq r\leq k$, called adapted coordinate system, such that
        \[
            \omega_r=\ds\sum_{i=1}^ndx^i\wedge dy^r_i
        \]
        for each $1\leq r\leq k$ and
        \[
            W(x)=span\big\{ \ds\frac{\partial}{\partial y^r_i},\, 1\leq i\leq n,\, 1\leq r\leq k\big \}\,.
        \]
    \end{theorem}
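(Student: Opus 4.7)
The plan is to combine Frobenius's theorem on the integrable distribution $W$ with two applications of the Poincar\'e lemma to build the adapted chart step by step. Since $W$ is involutive of codimension $n$, Frobenius produces local coordinates $(x^1,\ldots,x^n,z^1,\ldots,z^{nk})$ near any point such that the leaves of $\mathcal{F}$ are the slices $\{x^i=\hbox{const}\}$ and $W=\mathrm{span}\{\partial/\partial z^a\}$. Because $W$ is $k$-lagrangian, the characterization after Proposition \ref{charac_subspace} gives $W=W^{\bot,k}$, so $\omega_r(\partial/\partial z^a,\partial/\partial z^b)=0$ for all $r,a,b$ and consequently
\[
\omega_r=\tfrac{1}{2}A^r_{ij}(x,z)\,dx^i\wedge dx^j+B^r_{ia}(x,z)\,dx^i\wedge dz^a.
\]

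Next I extract the momentum coordinates. The $dx\wedge dz\wedge dz$ component of $d\omega_r=0$ reads $\partial_{z^b}B^r_{ia}-\partial_{z^a}B^r_{ib}=0$, so for each fixed $(r,i)$ the $1$-form $B^r_{ia}\,dz^a$ is closed along every leaf. The leafwise Poincar\'e lemma yields smooth functions $y^r_i(x,z)$, unique up to functions of $x$, with $\partial y^r_i/\partial z^a=B^r_{ia}$. That $(x^i,y^r_i)$ is a chart amounts to invertibility of the $nk\times nk$ matrix $(B^r_{ia})$, which follows from the defining condition (\ref{nondeg_cond}): any $v=v^a\partial/\partial z^a\in W$ with $\omega_r(\partial/\partial x^i,v)=v^a B^r_{ia}=0$ for all $r,i$ also annihilates every $\partial/\partial z^b$ by the $k$-lagrangian property, hence lies in $\bigcap_r\ker\omega_r=\{0\}$. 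In the new chart
\[
\omega_r=dx^i\wedge dy^r_i+\tfrac{1}{2}\widetilde{C}^r_{ij}(x,y)\,dx^i\wedge dx^j.
\]

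Finally, the closedness $d\omega_r=0$ forces the $dy\wedge dx\wedge dx$ coefficients to vanish, so $\widetilde{C}^r_{ij}$ depends only on $x$; the residual $dx\wedge dx\wedge dx$ component is precisely the closedness of $\widetilde{C}^r$ viewed as a $2$-form on the $x$-base. A second application of the Poincar\'e lemma gives $\widetilde{C}^r=d\eta^r$ for some $1$-form $\eta^r=\eta^r_i(x)\,dx^i$, and the substitution $y^r_i\mapsto y^r_i-\eta^r_i(x)$ absorbs the correction and produces $\omega_r=dx^i\wedge dy^r_i$. All the changes of variable preserve the $z$-slicing, so $W=\mathrm{span}\{\partial/\partial y^r_i\}$ holds automatically.

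The step I expect to be hardest is the coordinate change in the middle paragraph: the leafwise antiderivatives $y^r_i$ are only determined up to arbitrary functions of $x$, and one has to produce smooth choices whose Jacobian is actually invertible on an open set. The $k$-symplectic nondegeneracy handles the pointwise invertibility and smoothness comes from the parametric Poincar\'e lemma, but the bookkeeping of the free $x$-dependent constants is where one has to be careful. Once this is in place the rest is the same cohomological computation that appears in the classical symplectic Darboux proof, here performed in parallel for the $k$ forms and tied together by the global choice of foliation $W$.
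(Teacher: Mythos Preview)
The paper does not actually prove this theorem: it is stated as a recalled result, with the proof delegated to the references \cite{Aw-1992,LMS-1993}. There is therefore no in-paper argument to compare against.

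Your proof is correct and follows the classical route one finds in those references: Frobenius to flatten $W$, then two applications of the Poincar\'e lemma (first leafwise to manufacture the $y^r_i$, then on the base to absorb the residual $dx\wedge dx$ term). The key points are all in place: the vanishing of the $dz\wedge dz$ components uses $W=W^{\bot,k}$ (Proposition~\ref{charac_subspace}(v)); the invertibility of $(B^r_{ia})$ is exactly the nondegeneracy condition~(\ref{nondeg_cond}) restricted to $W$ against a transversal; and the reduction of $\widetilde{C}^r_{ij}$ to a function of $x$ alone follows from the $dy\wedge dx\wedge dx$ component of $d\omega_r=0$. The worry you flag about the leafwise antiderivatives is handled by the standard homotopy-operator form of the Poincar\'e lemma, which is smooth in parameters and produces a specific choice of $y^r_i$ with no residual ambiguity to manage; the freedom to add functions of $x$ is precisely what you exploit in the final step, so nothing is lost there.
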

    \begin{remark}
        In this section we introduce the notion of polarized $k$-symplectic manifolds, this structure is called $k$-symplectic structure by Awane \cite{Aw-1992,Aw-1998} and it is equivalent to the notion of standard polysymplectic structure of Gunther \cite{Gu-1987} and integrable $p$-almost cotangent structure introduced by M. de Le\'{o}n \textit{et al.} \cite{LMS-1988, LMS-1993}
    \end{remark}

\subsection{Normal form for $k$-lagrangian submanifolds}\label{normal_form_section}

    There is an important theorem due to A. Weinstein which gives the normal form for a lagrangian submanifold $\mathcal{L}$ in a symplectic manifold $(\mathcal{P},\omega)$.
        \begin{theorem} (\textbf{A. Weinstein \cite{W_1971}})
            Let $(\mathcal{P},\omega)$ be a symplectic manifold and let $\mathcal{L}$ be a lagrangian submanifold. Then there exists a tubular neighborhood $U$ of $\mathcal{L}$ in $\mathcal{P}$, and a diffeomorphism $\phi\colon U\to V=\phi(U)\subset T^*\mathcal{L}$ into an open neighborhood $V$ of the zero cross-section in $T^*\mathcal{L}$ such that $\phi^*(\omega_\mathcal{L}\vert_{V})=\omega\vert_{U}$, where $\omega_\mathcal{L}$ is the canonical symplectic form on $T^*\mathcal{L}$.
        \end{theorem}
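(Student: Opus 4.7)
My plan is to combine the tubular neighborhood theorem with Moser's deformation trick, following Weinstein's original strategy.

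First, I would exploit the Lagrangian condition to identify the normal bundle of $\mathcal{L}$ in $\mathcal{P}$ with the cotangent bundle $T^*\mathcal{L}$. For each $x\in\mathcal{L}$, the symplectic form induces the musical isomorphism $T_x\mathcal{P}\to T_x^*\mathcal{P}$, $v\mapsto\iota_v\omega$; restricting the resulting covector to $T_x\mathcal{L}$ kills exactly the vectors in $(T_x\mathcal{L})^{\perp}=T_x\mathcal{L}$, so the construction descends to a linear isomorphism $N_x\mathcal{L}=T_x\mathcal{P}/T_x\mathcal{L}\to T_x^*\mathcal{L}$. Globally this yields a vector bundle isomorphism $N\mathcal{L}\cong T^*\mathcal{L}$.

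Next, by the tubular neighborhood theorem applied to $\mathcal{L}\subset\mathcal{P}$, I would choose a diffeomorphism $\psi$ from an open neighborhood $U_0$ of $\mathcal{L}$ in $\mathcal{P}$ onto an open neighborhood $V_0$ of the zero section in $T^*\mathcal{L}$ which is the identity on $\mathcal{L}$ and whose derivative along $\mathcal{L}$ realizes the isomorphism $N\mathcal{L}\cong T^*\mathcal{L}$ above. Set $\omega_0=(\psi^{-1})^*(\omega\vert_{U_0})$ and $\omega_1=\omega_\mathcal{L}\vert_{V_0}$. By the matching condition built into the first step, the two symplectic forms $\omega_0,\omega_1$ on $V_0$ agree at every point of the zero section $\mathcal{L}\subset V_0$.

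The final step is Moser's trick. Define $\omega_t=(1-t)\omega_0+t\omega_1$; after shrinking $V_0$ if necessary, $\omega_t$ is nondegenerate on a neighborhood of $\mathcal{L}$ for every $t\in[0,1]$. Since $\omega_1-\omega_0$ is closed and vanishes pointwise on $\mathcal{L}$, the relative Poincar\'e lemma produces a $1$-form $\alpha$ on a neighborhood of $\mathcal{L}$ with $d\alpha=\omega_1-\omega_0$ and $\alpha\vert_\mathcal{L}=0$. Solving $\iota_{X_t}\omega_t=-\alpha$ then defines a smooth time-dependent vector field $X_t$ which vanishes along $\mathcal{L}$; its flow $\phi_t$ is therefore defined on a (possibly smaller) neighborhood of $\mathcal{L}$ for every $t\in[0,1]$ and satisfies $\phi_t\vert_\mathcal{L}=\mathrm{id}_\mathcal{L}$. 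The standard calculation $\tfrac{d}{dt}(\phi_t^*\omega_t)=\phi_t^*(d\iota_{X_t}\omega_t+\omega_1-\omega_0)=0$ gives $\phi_1^*\omega_1=\omega_0$, so $\Phi:=\phi_1\circ\psi$, restricted to a suitable $U\subset U_0$, provides the required symplectomorphism onto a neighborhood $V\subset V_0$ of the zero section in $T^*\mathcal{L}$.

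The main obstacle is producing the primitive $\alpha$ with the vanishing property $\alpha\vert_\mathcal{L}=0$: this is exactly what forces $X_t$ to vanish on $\mathcal{L}$ and guarantees both that the isotopy is defined for all $t\in[0,1]$ and that it fixes $\mathcal{L}$ pointwise. The required $\alpha$ is obtained from the explicit homotopy operator associated with the radial contraction of $T^*\mathcal{L}$ onto its zero section (transported back by $\psi$), rather than from an arbitrary primitive furnished by the ordinary Poincar\'e lemma.
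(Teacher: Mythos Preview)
Your proposal is correct and follows the standard Weinstein argument. Note, however, that the paper does not actually prove this statement: it is quoted without proof as Weinstein's classical result, and serves only as motivation for the paper's own Theorem~\ref{normal_form} in the $k$-symplectic setting. That said, the proof the paper gives for its $k$-symplectic generalization is, when specialized to $k=1$, essentially the same as yours: a vector-bundle identification of a Lagrangian complement with $T^*\mathcal{L}$, verification that the two symplectic forms agree along $\mathcal{L}$, the relative Poincar\'e lemma (stated as Lemma~\ref{Poincare}) to obtain a primitive vanishing on $\mathcal{L}$, and Moser's isotopy. The only cosmetic difference is that the paper works with a chosen complementary $k$-Lagrangian distribution $W$ rather than the abstract normal bundle $T\mathcal{P}/T\mathcal{L}$; for $k=1$ any Lagrangian complement plays the role of $W$ and the two descriptions coincide.
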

    Now we will extend to the $k$-symplectic setting this important theorem due to A. Weinstein. Before, we recall the relative Poincar\'e lemma \cite{W_1971}, which will be useful in the sequel.
    \begin{lemma}\label{Poincare} (\textbf{Relative Poincar\'e lemma})
        Let $N$ be a submanifold of a differentiable manifold $M$, and let $U$ be a tubular neighborhood  of $N$ with bundle map $\pi_0\colon U\to N$. Notice that $\pi_0$ is a vector bundle. Denote by $\Delta$ the dilation vector field of this vector bundle, and let $\psi_s$ be the multiplication by $s$. If we define an integral operator on forms on $U$ as follows
        \[
            I(\Omega)_p=\int_0^1\imath_{\Delta_s}\psi_s^*\Omega_pds
        \]
        where $\Delta_s=\frac{1}{s}\Delta$ and $p\in U$, then we have
        \[
            I(d\Omega)+d(I\Omega)=\Omega-\pi_0^*\left(\Omega\vert_N\right)
        \]
        being $\Omega\vert_N$ the form on $N$ obtained by restricting $\Omega$ pointwise to $TN$ (observe that $U$ can be taken as a normal bundle of $TN$ in $M$).
    \end{lemma}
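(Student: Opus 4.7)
The plan is standard, following the classical homotopy-operator argument for the ordinary Poincar\'e lemma. I would interpret $\psi_s$ as a smooth homotopy of maps $U\to U$ joining $\psi_1=\mathrm{id}_U$ at $s=1$ to $\psi_0=\iota_N\circ\pi_0$ at $s=0$ (where $\iota_N\colon N\hookrightarrow U$ is the zero section), and exploit the fact that $\Delta_s$ is precisely the time-dependent generator of this homotopy so that, after applying Cartan's magic formula, integration in $s$ delivers the stated identity.

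\textit{Step 1: Identify the endpoint pullbacks.} Since $\psi_1=\mathrm{id}$, one has $\psi_1^*\Omega=\Omega$; since $\psi_0=\iota_N\circ\pi_0$, one has $\psi_0^*\Omega=\pi_0^*(\iota_N^*\Omega)=\pi_0^*(\Omega\vert_N)$. Therefore the right-hand side of the claimed formula is exactly $\psi_1^*\Omega-\psi_0^*\Omega$.

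\textit{Step 2: Verify that $\Delta_s$ generates the homotopy and apply Cartan.} In local vector-bundle coordinates $(x^a,y^i)$ on $U$, where $\pi_0(x,y)=x$, $\Delta=y^i\partial/\partial y^i$, and $\psi_s(x,y)=(x,sy)$, a direct check gives $\frac{d}{ds}\psi_s(p)=\Delta_s(\psi_s(p))$, so that
\[
\frac{d}{ds}\psi_s^*\Omega \;=\; \psi_s^*\mathcal{L}_{\Delta_s}\Omega.
\]
The same coordinate formulas yield $(\psi_s)_*\Delta_s=\Delta_s\circ\psi_s$, hence $\psi_s^*\,\imath_{\Delta_s}\alpha=\imath_{\Delta_s}\,\psi_s^*\alpha$ for every form $\alpha$. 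Combining this with Cartan's identity $\mathcal{L}_{\Delta_s}=d\,\imath_{\Delta_s}+\imath_{\Delta_s}\,d$ gives
\[
\frac{d}{ds}\psi_s^*\Omega \;=\; d\bigl(\imath_{\Delta_s}\psi_s^*\Omega\bigr) \;+\; \imath_{\Delta_s}\psi_s^*(d\Omega).
\]

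\textit{Step 3: Integrate and conclude.} Integrating in $s$ from $0$ to $1$ and pulling $d$ outside the first integral (which is legitimate since $d$ is $s$-independent) gives
\[
\psi_1^*\Omega-\psi_0^*\Omega \;=\; d(I\Omega)+I(d\Omega),
\]
and combining this with Step 1 yields the lemma.

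The main technical obstacle is the apparent singularity of $\Delta_s=\frac{1}{s}\Delta$ at $s=0$, which a priori threatens both the definition of $I(\Omega)$ and differentiation under the integral sign. The remedy is visible in the coordinates above: $\psi_s^*(dy^i)=s\,dy^i$ while $\psi_s^*(dx^a)=dx^a$, so any summand of $\psi_s^*\Omega$ involving $q\geq 1$ vertical factors carries an overall $s^q$; contracting with $\imath_{\Delta_s}$ kills one vertical slot and supplies a factor $y^i/s$, leaving net order $s^{q-1}$ with $q-1\geq 0$. Terms with $q=0$ are annihilated outright by $\imath_{\Delta_s}$. Consequently $\imath_{\Delta_s}\psi_s^*\Omega$ extends smoothly across $s=0$, the integral $I(\Omega)$ is well defined and smooth on $U$, and all formal manipulations above (differentiation under the integral, interchange of $d$ and $\int$) are fully justified.
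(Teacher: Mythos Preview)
The paper does not supply its own proof of this lemma: it is merely recalled from Weinstein's original work \cite{W_1971} as a known tool, so there is nothing to compare your argument against within the paper itself. Your proof is correct and is precisely the classical homotopy-operator derivation one would expect; in particular, your verification that $\psi_s^*\,\imath_{\Delta_s}=\imath_{\Delta_s}\,\psi_s^*$ (needed so that the integrand matches the specific form of $I$ as stated) and your treatment of the removable singularity at $s=0$ are both accurate and complete.
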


    \begin{theorem}\label{normal_form}
        Let $(M,\omega_1,\ldots, \omega_k, W)$ be a polarized $k$-symplectic manifold. Let $\mathcal{L}$ be a $k$-lagrangian submanifold which is complementary to $W$, that is,  $T\mathcal{L}\oplus W\vert_\mathcal{L}=TM\vert_\mathcal{L}$. Then there is a tubular neighborhood $U$ of $\mathcal{L}$ and a diffeomorphism $\Phi\colon U\to V\subset T^*\mathcal{L}\oplus\stackrel{k}{\cdots}\oplus T^*\mathcal{L}$ where $V$ is a neighborhood of the zero section, such that $\Phi\vert_\mathcal{L}$ is the standard identification of $\mathcal{L}$ with the zero section of $T^*\mathcal{L}\oplus\stackrel{k}{\cdots}\oplus T^*\mathcal{L}$, and
        \[
            \Phi^*\left(\Omega^\mathcal{L}_r\vert_V\right)=
            \omega_r\vert_U,
        \]for all $r\in\{1,\ldots, k\}$, where $(\Omega^\mathcal{L}_1,\ldots, \Omega^\mathcal{L}_k)$ is the canonical $k$-symplectic structure on $T^*\mathcal{L}\oplus\stackrel{k}{\cdots}\oplus T^*\mathcal{L}$.
    \end{theorem}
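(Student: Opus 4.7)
The proof follows the Moser--Weinstein strategy: first construct an initial diffeomorphism $\phi_0$ matching the two $k$-symplectic structures to first order along $\mathcal{L}$, then correct it via Moser's trick. For the first step, I would build $\phi_0:U_0\to V_0$ from a tubular neighborhood $U_0$ of $\mathcal{L}$ in $M$ onto a neighborhood $V_0$ of the zero section in $T^*\mathcal{L}\oplus\stackrel{k}{\cdots}\oplus T^*\mathcal{L}$ as a composition of two maps. Since $TM|_\mathcal{L}=T\mathcal{L}\oplus W|_\mathcal{L}$, the integrability of $W$ allows one to use the leaves of $\mathcal{F}$ through $\mathcal{L}$ to identify $U_0$ with a neighborhood of $\mathcal{L}$ in the total space of $W|_\mathcal{L}$. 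Then Proposition~\ref{darboux lineal}, applied fiberwise over $\mathcal{L}$, produces a vector bundle isomorphism
\[
\Psi:W|_\mathcal{L}\longrightarrow T^*\mathcal{L}\oplus\stackrel{k}{\cdots}\oplus T^*\mathcal{L},\qquad w\longmapsto\bigl(-(\imath_w\omega_1)|_{T\mathcal{L}},\ldots,-(\imath_w\omega_k)|_{T\mathcal{L}}\bigr),
\]
and composing with $\Psi$ yields $\phi_0$. By construction, $\phi_0|_\mathcal{L}$ is the standard identification of $\mathcal{L}$ with the zero section, $d\phi_0$ carries $W$ to the vertical distribution of $\pi^k_\mathcal{L}$ along $\mathcal{L}$, and $(d\phi_0)_x$ is a linear $k$-symplectomorphism for every $x\in\mathcal{L}$.

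Next, set $\tilde\omega_r:=(\phi_0^{-1})^*\omega_r$ on $V_0$. Both families $(\tilde\omega_r)$ and $(\Omega^\mathcal{L}_r)$ are closed, render the zero section and the fibers of $\pi^k_\mathcal{L}$ $k$-lagrangian, and agree on $T_xV_0$ at every $x\in\mathcal{L}$. Thus the closed form $\Omega^\mathcal{L}_r-\tilde\omega_r$ restricts to zero on $\mathcal{L}$, and the relative Poincar\'e Lemma~\ref{Poincare} yields 1-forms $\alpha_r$ on $V_0$, vanishing along $\mathcal{L}$, with $\Omega^\mathcal{L}_r-\tilde\omega_r=d\alpha_r$. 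Consider the interpolation $\omega^t_r=(1-t)\tilde\omega_r+t\,\Omega^\mathcal{L}_r=\tilde\omega_r+t\,d\alpha_r$; each $\omega^t_r$ is closed and coincides with $\Omega^\mathcal{L}_r$ on $T(V_0)|_\mathcal{L}$, so by openness of the non-degeneracy condition $\bigcap_r\ker\omega^t_r=\{0\}$, after possibly shrinking $V_0$, the family $(\omega^t_1,\ldots,\omega^t_k)$ is $k$-symplectic for every $t\in[0,1]$. Then I would seek a time-dependent vector field $X_t$ on $V_0$, vanishing along $\mathcal{L}$, solving
\[
\imath_{X_t}\omega^t_r=-\alpha_r,\qquad r=1,\ldots,k.
\]
For such an $X_t$, closedness of $\omega^t_r$ gives $\frac{d}{dt}(\psi_t^*\omega^t_r)=\psi_t^*(d\,\imath_{X_t}\omega^t_r+d\alpha_r)=0$, where $\psi_t$ is the flow of $X_t$, so $\psi_1^*\Omega^\mathcal{L}_r=\tilde\omega_r$, and $\Phi:=\psi_1\circ\phi_0$ is the diffeomorphism asserted in the theorem.

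The main obstacle is the simultaneous solvability of the $k$ Moser equations for a single $X_t$: the flat map $X\mapsto(\imath_X\omega^t_1,\ldots,\imath_X\omega^t_k)$ is injective (by the $k$-symplectic condition) but has image of codimension $(k-1)n(k+1)$, so the system is \emph{a priori} overdetermined. The polarization removes this obstruction. Since $\phi_0$ sends $W$ to the vertical distribution of $\pi^k_\mathcal{L}$, in canonical coordinates $(x^i,p^r_i)$ on $V_0$ the pushed-forward form has the structure
\[
\tilde\omega_r=C^{r,s}_{ij}(x,p)\,dx^i\wedge dp^s_j+\tfrac{1}{2}D^r_{ij}(x,p)\,dx^i\wedge dx^j,
\]
with no $dp\wedge dp$ terms (fibers are $k$-lagrangian), $C^{r,s}_{ij}(x,0)=\delta^{rs}\delta_{ij}$, and $D^r_{ij}(x,0)=0$. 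A direct calculation of $\alpha_r$ using the dilation field $\Delta=p^r_i\,\partial/\partial p^r_i$ inside the operator $I$ of Lemma~\ref{Poincare} yields a pure $dx$-form $\alpha_r=A_{r,i}(x,p)\,dx^i$. With the vertical ansatz $X_t=b^s_j(t,x,p)\,\partial/\partial p^s_j$, the Moser system reduces to the linear equation $\hat C^{r,s}_{ij}(t,x,p)\,b^s_j=A_{r,i}(x,p)$, where $\hat C^{r,s}_{ij}:=(1-t)C^{r,s}_{ij}+t\delta^{rs}\delta_{ij}$, whose $kn\times kn$ coefficient matrix equals the identity on $\mathcal{L}$ and is therefore invertible on a neighborhood, producing the required smooth $X_t$ and completing the proof.
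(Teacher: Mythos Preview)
Your proposal is correct and follows essentially the same Moser--Weinstein strategy as the paper: build an initial identification of a tubular neighborhood with a neighborhood of the zero section via the bundle map $w\mapsto(-\imath_w\omega_1,\ldots,-\imath_w\omega_k)$, check the two $k$-symplectic structures agree along $\mathcal{L}$, use the relative Poincar\'e lemma to get primitives vanishing on $\mathcal{L}$, interpolate, and run Moser's flow with a vertical time-dependent vector field.

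The one point where you go beyond the paper is the solvability of the $k$ simultaneous Moser equations $\imath_{X_t}\omega^t_r=-\alpha_r$. The paper simply asserts the existence of a unique $\pi^k_\mathcal{L}$-vertical vector field satisfying them; you correctly identify this as an \emph{a priori} overdetermined system and supply the missing justification, namely that the polarization forces the fibers of $\pi^k_\mathcal{L}$ to be $k$-isotropic for $\tilde\omega_r$ (because your $\phi_0$ takes leaves of $W$ to fibers), so $\tilde\omega_r$ has no $dp\wedge dp$ component, the operator $I$ of Lemma~\ref{Poincare} then produces purely horizontal primitives $\alpha_r$, and the resulting $nk\times nk$ linear system for a vertical $X_t$ is invertible near $\mathcal{L}$. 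This is exactly the mechanism the paper is relying on tacitly, so your argument and the paper's are the same proof, with yours written out more carefully at the crucial step.
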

    \begin{proof}
         $\mathcal{L}$ is a $k$-lagrangian submanifold of $M$ complementary to $W$, that is, for each $x\in \mathcal{L}$, $T_x\mathcal{L}$ is a $k$-lagrangian subspaces of $T_xM$ such that $T_x\mathcal{L}\subset (T_x\mathcal{L})^{\bot, k}$ and $T_xM=T_x\mathcal{L}\oplus W(x)$, being $W(x)$ a $k$-lagrangian subspace of $T_xM$

        Firstly, we define a vector bundle morphism over the identity of $\mathcal{L}$
        \[
             \xymatrix{ W \ar[rr]^-{\phi}\ar[rd]_-{\pi_0} &  &T^*\mathcal{L}\oplus \stackrel{k}{\cdots}\oplus T^*\mathcal{L}\ar[ld]^-{\pi^k_\mathcal{L}}\\ &\mathcal{L}&}
        \]
        where
        \[
            \phi(w)=(-\imath_w\omega_1,\ldots,-\imath_w\omega_k)\,.
        \]

        The morphism $\phi$ is injective. In fact,
        \[
            \ker\,\phi=\bigcap_{r=1}^k\ker\,\omega_r=\{0\}
        \]
        where in the last identity we use that $(\omega_1,\ldots, \omega_k)$ is a $k$-symplectic structure on $M$. Now by dimensionally assumptions, we deduce that $\phi$ is a vector bundle isomorphism.

        Since $TM\vert_\mathcal{L}=T\mathcal{L}\oplus W\vert_{\mathcal{L}}$,  $\phi$ induces a morphism on a tubular neighborhood $U$ defined by $W$ onto a neighborhood of $\mathcal{L}$ in $T^*\mathcal{L}\oplus\stackrel{k}{\cdots}\oplus T^*\mathcal{L}$ (as usual, the latter embedding is understood as the identification of $\mathcal{L}$ with the zero section). We shall denote the restriction of $\phi$ to the tubular neighborhood $U$ by $\varphi$. Notice that the restriction of $\varphi$ to $\mathcal{L}$ is the identity, so that $T\varphi$ is also the identity on $T\mathcal{L}$. Since $\phi$ is injective, then $\varphi\colon U\to \varphi(U)$ is a bundle isomorphism.

        Using the identifications $TM\vert_{\mathcal{L}}=T\mathcal{L}\oplus W\vert_{\mathcal{L}}$ and $T_{\alpha_x}(T^*\mathcal{L}\oplus\stackrel{k}{\cdots}\oplus T^*\mathcal{L})=T_x\mathcal{L}\times T^*_x\mathcal{L}\times\stackrel{k}{\cdots}\times T^*_x\mathcal{L}$\footnote{The description of this identification was given in remark \ref{relation_canonical_models}} and the composition $\Xi= \Psi^{-1}\circ (Id_{T_x\mathcal{L}},\varphi)$ given by
        \[
            \Xi(v_x+w_x)=\Psi^{-1}(v_x,-\imath_{w_x}\omega_1,\ldots, -\imath_{w_x}\omega_k),
        \]
        we have
        \begin{align*}
             & \Xi^*\left(\Omega^\mathcal{L}_r(\alpha_x)\right) (v_x+w_x,v'_x+w'_x)
             \\ =&
             (Id_{T_x\mathcal{L}},\varphi)^*\left((\Psi^{(-1)})^*\left(\Omega^\mathcal{L}_r(\alpha_x)\right) \right) (v_x+w_x,v'_x+w'_x)\\
             =&
             (Id_{T_x\mathcal{L}},\varphi)^*(\omega^{T_x\mathcal{L}}_r) (v_x+w_x,v'_x+w'_x)
             \\ =&
             \omega^{T_x\mathcal{L}}_r\left( (v_x,-\imath_{w_x}\omega_1,\ldots,-\imath_{w_x}\omega_k), (v'_x,-\imath_{w'_x}\omega_1,\ldots,-\imath_{w'_x}\omega_k)\right)
             \\
            =&
                \left(-\imath_{w'_x}\omega_r\right)(v_x)- \left(-\imath_{w_x}\omega_r\right)(v'_x) = \omega_r(x)(v_x,w'_x) + \omega_r(x)(w_x,v'_x)\\
            =&
                \omega_r(x)(v_x+w_x,v'_x+w'_x)\,,
        \end{align*}
    where  we have used that $(\Psi^{-1})^*(\Omega^L_r(\alpha_x))= \omega^{T_xL}_r$ (see (\ref{relation}) and in the last identity that $T_xL$ and $W_x$ are two $k$-isotropic subspaces. Then we have
        \[
            \Xi^*\Omega^L_r=\omega_r \makebox{ on } \mathcal{L}.
        \]

    Now we use $\varphi\colon U\to \varphi(U)$ to pushforward $\omega_1,\ldots, \omega_r$ and  we obtain a family of $k$ $2$-forms $\Omega_1,\ldots, \Omega_k$ in a neighborhood of $\mathcal{L}$ in $T^*\mathcal{L}\oplus\stackrel{k}{\ldots}\oplus T^*\mathcal{L}$. Using Lemma \ref{Poincare} we deduce that each $\Omega_r=d\Theta_r$, where $\Theta_r=I(\Omega_r)$, and from the definition of $I$ one obtains
    \begin{equation}\label{rel_theta}
        \Theta^\mathcal{L}_r\vert_{\mathcal{L}}=\Theta_r\vert_\mathcal{L} =0.
    \end{equation}

    Define the family of $k$ $2$-forms defined in a neighborhood of $\mathcal{L}$ in $T^*\mathcal{L}\oplus\stackrel{k}{\ldots}\oplus T^*\mathcal{L}$.
    \[
        \Omega_{r,s}=\Omega^\mathcal{L}_r + s(\Omega_r-\Omega^\mathcal{L}_r),\; s\in [0,1].
    \]

    It is easy to check that
    \[
        \Omega_{r,s}\vert_\mathcal{L}=
        \Omega^\mathcal{L}_r\vert_\mathcal{L}=\Omega_r\vert_\mathcal{L}\,.
    \]
    Then if $x\in \mathcal{L}$ then $\displaystyle \bigcap_{r=1}^k\ker \Omega_{r,s}(x)=\displaystyle \bigcap_{r=1}^k\ker \Omega^\mathcal{L}_r(x)=\{0\}$ because $(\Omega^\mathcal{L}_1,\ldots, \Omega^\mathcal{L}_k)$ is a $k$-symplectic structure.  We can find a neighborhood of $\mathcal{L}$ on $T^*\mathcal{L}\oplus\stackrel{k}{\cdots}\oplus T^*\mathcal{L}$ on which $\displaystyle \bigcap_{r=1}^k\ker \Omega_{r,s}=0$ for all $s\in [0, 1]$, that is, in this neighborhood $(\Omega_{1,s},\ldots, \Omega_{k,s})$ is a $k$-symplectic structure, for all $s$.

    From the property (\ref{rel_theta}) we deduce that there is a unique time-depending $\pi^k_\mathcal{L}$-vertical vector field $X_s$ such that
    \[
        \imath_{X_s}\Omega_{r,s}=\Theta_r-\Theta^\mathcal{L}_r
    \]

    Moreover, the vector field $X_s$ vanishes on $\mathcal{L}$, thus we can neighborhood of $\mathcal{L}$ in $T^*\mathcal{L}\oplus \stackrel{k}{\ldots}\oplus T^*\mathcal{L}$ such that the flow $\psi_s$ of $X_s$ is defined at least for $s\leq 1$. Therefore
    \begin{align*}
        \displaystyle\frac{d}{ds}\left(\psi_s^*\Omega_{r,s}\right) &= \psi_s^*\left(\mathcal{L}_{X_s}\Omega_{r,s}\right) + \psi_s^*\left(\frac{d\Omega_{r,s}}{ds}\right)   \\
        &= \psi_s^*\left( d\imath_{X_s}\Omega_{r,s}\right) +  \psi_s^*\left( \Omega_r-\Omega^\mathcal{L}_r\right) \\
        &= \psi_s^*\left( d\Theta_r+d\Theta^\mathcal{L}_R+\Omega_r-\Omega^\mathcal{L}_r\right)=0\,.
    \end{align*}
    Then,
    \[
        \psi_1^*\Omega_{r,1}=\psi_0^*\Omega^\mathcal{L}_r=\Omega^\mathcal{L}_r.
    \]

    Finally, as $X_s\vert_\mathcal{L}=0$, $(\psi_s)\vert_\mathcal{L}=id_\mathcal{L}$ and then we deduce that $\psi_1\circ \Xi$ gives us the desired local diffeormorphism.
    \end{proof}

    This result is similar to the theorem which gives the normal form of a lagrangian submanifold in the multisymplectic setting, see for instance \cite{LMS-2007, Martin}.

    As a consequence of the above theorem we obtain a \textit{Equivalence Theorem for $k$-lagrangian submanifolds}.
    \begin{theorem}\label{equivalence}
        Let $(M_i,\omega^i_1,\ldots, \omega^i_k, W_i), \, (i=1,2)$ be two polarized $k$-symplectic manifolds such that $\mathcal{L}$ is a $k$-lagrangian submanifold of each $M_i$ complementary to $W_i$. Then there exist a diffeomorphism $\Phi\colon U\subset M_1\to V=\Phi(U)\subset M_2$ from a neighborhood $U$ of $\mathcal{L}$ in $M_1$ in a neigborhood $V$ of $\mathcal{L}$ in $M_2$, such that $\Phi\vert_\mathcal{L}=Id_\mathcal{L}$ and $\Phi^*\left(\omega^2_r\vert_V\right) =\omega^1_r\vert_U$ for each $r\in \{1,\ldots, k\}$.
    \end{theorem}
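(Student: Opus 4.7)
The plan is to derive Theorem \ref{equivalence} as a direct consequence of the normal form Theorem \ref{normal_form}, by applying it separately to each of the two polarized $k$-symplectic manifolds and then composing the resulting diffeomorphisms through the common model $T^*\mathcal{L}\oplus\stackrel{k}{\cdots}\oplus T^*\mathcal{L}$.

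First, I would apply Theorem \ref{normal_form} to the polarized $k$-symplectic manifold $(M_1,\omega^1_1,\ldots,\omega^1_k,W_1)$, in which $\mathcal{L}$ sits as a $k$-lagrangian submanifold complementary to $W_1$. This produces a tubular neighborhood $U_1$ of $\mathcal{L}$ in $M_1$ and a diffeomorphism $\Phi_1\colon U_1\to V_1\subset T^*\mathcal{L}\oplus\stackrel{k}{\cdots}\oplus T^*\mathcal{L}$ onto a neighborhood $V_1$ of the zero section, such that $\Phi_1\vert_\mathcal{L}$ is the standard identification of $\mathcal{L}$ with the zero section and $\Phi_1^*(\Omega^\mathcal{L}_r\vert_{V_1})=\omega^1_r\vert_{U_1}$ for every $r$. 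Applying the same theorem to $(M_2,\omega^2_1,\ldots,\omega^2_k,W_2)$ yields an analogous diffeomorphism $\Phi_2\colon U_2\to V_2$ with $\Phi_2^*(\Omega^\mathcal{L}_r\vert_{V_2})=\omega^2_r\vert_{U_2}$.

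Next, I would shrink the neighborhoods in order to glue. Let $\widetilde{V}=V_1\cap V_2$, which is still a neighborhood of the zero section, and set $\widetilde{U}_1=\Phi_1^{-1}(\widetilde{V})\subset M_1$ and $\widetilde{U}_2=\Phi_2^{-1}(\widetilde{V})\subset M_2$, both of which are tubular neighborhoods of $\mathcal{L}$. Then I define
\[
\Phi=\Phi_2^{-1}\circ\Phi_1\colon \widetilde{U}_1\longrightarrow \widetilde{U}_2,
\]
which is a diffeomorphism between neighborhoods of $\mathcal{L}$ in $M_1$ and $M_2$ respectively.

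Finally, I would verify the two required properties. On the one hand, $\Phi\vert_\mathcal{L}=\Phi_2^{-1}\vert_{\Phi_1(\mathcal{L})}\circ\Phi_1\vert_\mathcal{L}=\mathrm{Id}_\mathcal{L}$, because both $\Phi_1\vert_\mathcal{L}$ and $\Phi_2\vert_\mathcal{L}$ coincide with the standard identification of $\mathcal{L}$ with the zero section of $T^*\mathcal{L}\oplus\stackrel{k}{\cdots}\oplus T^*\mathcal{L}$. On the other hand, since $\Phi_2^*(\Omega^\mathcal{L}_r\vert_{\widetilde{V}})=\omega^2_r\vert_{\widetilde{U}_2}$ implies $(\Phi_2^{-1})^*(\omega^2_r\vert_{\widetilde{U}_2})=\Omega^\mathcal{L}_r\vert_{\widetilde{V}}$, we get
\[
\Phi^*(\omega^2_r\vert_{\widetilde{U}_2})=\Phi_1^*\bigl((\Phi_2^{-1})^*\omega^2_r\bigr)=\Phi_1^*(\Omega^\mathcal{L}_r\vert_{\widetilde{V}})=\omega^1_r\vert_{\widetilde{U}_1},
\]
for each $r\in\{1,\ldots,k\}$, as desired. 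Since Theorem \ref{normal_form} does the entire analytic work, the present result is essentially a formal ``transitivity'' corollary; the only minor obstacle is the choice of a common neighborhood in the target model, which is handled by the intersection $\widetilde{V}=V_1\cap V_2$.
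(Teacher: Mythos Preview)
Your proof is correct and follows essentially the same approach as the paper: apply Theorem \ref{normal_form} to each $(M_i,\omega^i_1,\ldots,\omega^i_k,W_i)$, intersect the images in $T^*\mathcal{L}\oplus\stackrel{k}{\cdots}\oplus T^*\mathcal{L}$, and set $\Phi=\Phi_2^{-1}\circ\Phi_1$. The paper's proof is line-for-line the same, including the verification of $\Phi\vert_\mathcal{L}=\mathrm{Id}_\mathcal{L}$ and the pullback chain $\Phi^*\omega^2_r=\Phi_1^*(\Phi_2^{-1})^*\omega^2_r=\Phi_1^*\Omega^\mathcal{L}_r=\omega^1_r$.
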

    \begin{proof}
        From Theorem \ref{normal_form} we know that for each $i=1,2$ there exist a tubular neighborhood $U_i$ of $\mathcal{L}$ and a diffeomorphism $\Phi_i\colon U_i\to V_i=\Phi_i(U_i)\subset T^*\mathcal{L}\oplus\stackrel{k}{\cdots}\oplus T^*\mathcal{L}$ such that $\Phi_i\vert_\mathcal{L}=Id_\mathcal{L}$ (here we consider the standard identification of $\mathcal{L}$ with the zero section of $T^*\mathcal{L}\oplus\stackrel{k}{\cdots}\oplus T^*\mathcal{L}$) and
        \[
            (\Phi_i)^*\left(\Omega^{\mathcal{L}}_r\vert_{V_i}\right)=\omega^i_r\vert_{U_i}\,.
        \]

        We now consider the composition
        \[
            \xymatrix{ U=\Phi_1^{-1}(V_1\cap V_2)\ar[r]^-{\Phi_1}\ar@/^{10mm}/[rr]^-{\Phi} & V_1\cap V_2\ar[r]^-{\Phi_2^{-1}} & V=\Phi_2^{-1}(V_1\cap V_2)
            }
        \]

        Then $\Phi\vert_\mathcal{L}=Id_\mathcal{L}$ and
        \[
            \Phi^*\left(\omega^2_r\vert_{V}\right)=\Phi_1^*\left( (\Phi_2^{-1})^*(\omega^2_r\vert_{U}) \right) = \Phi_1^*\left( \Omega^{\mathcal{L}}_r\vert_{V_1\cap V_2}\right) = \omega^1_r\vert_{U}\,.
        \]
    \end{proof}

\section{The group of diffeomorphism of a $k$-symplectic manifolds}

    The diffeomorphisms of a manifold $M$ into itseft may be identified with their graphs, i.e., with the submanifolds of $M\times M$ which are mapped diffeomorphically onto $M$ by both of the projections $\pi_1$ and $\pi_2$.

    If $M$ has a $k$-symplectic structure $(\omega_1,\ldots, \omega_k)$, $M\times M$ has the $k$-symplectic structure $(\omega_1\ominus\omega_1,\ldots, \omega_k\ominus\omega_k)$ introduced in Proposition \ref{canonical_relation2}. In that Proposition we prove that a diffeomorphism $\phi\colon M\to M$ is a $k$-symplectomorphism if and only if its graph $\Gamma_\phi$ is a $k$-lagrangian submanifold of $M\times M$. Therefore, if $\Delta$ denotes the graph of the identity $Id_M$, then $\Delta$ is a $k$-lagrangian submanifold of $M\times M$, and, by Theorem \ref{normal_form}, there is a tubular neighborhood $U$ of $\Delta$ in $M\times M$ and a diffeomorphism $\Phi\colon U\to V=\Phi(U)\subset T^*\Delta\oplus\stackrel{k}{\cdots}\oplus T^*\Delta$ such that $\Phi\vert_\Delta$ is the standard identification of $\Delta$ with the zero section of $T^*\Delta\oplus\stackrel{k}{\cdots}\oplus T^*\Delta$ and
    \[
        \Phi^*\left( \Omega^\Delta_r\vert_V\right) = (\omega_r\ominus\omega_r)\vert_U\,,
    \]
    for all $r\in \{1,\ldots, k\}$.

    On the other hand, $Id_M$ induces the following diffeomorphism of $M$ to $\Gamma_{Id_M}$
    \[
        \begin{array}{rcl}
            f\colon M & \to & \Delta\\
            x & \mapsto & (x,x)
        \end{array}
    \]

    We now consider the canonical prolongation $(T^1_k)^*f$ of this diffeomorphism to the bundle of $k^1$-covelocities from $T^*\Delta\oplus\stackrel{k}{\cdots}\oplus T^*\Delta$ to $T^*M\oplus\stackrel{k}{\cdots}\oplus T^*M$. Thus, the composititon of $\Phi$ and $(T^1_k)^*f$
    {\footnotesize
    \[
        \xymatrix{ U\ar[r]^-{\Phi} & V=\Phi(U)\subset T^*\Delta\oplus\stackrel{k}{\cdots}\oplus T^*\Delta \ar[r]^-{(T^1_k)^*f} & W=(T^1_k)^*f(V)\subset T^*M\oplus\stackrel{k}{\cdots}\oplus T^*M
        }
    \] }
    gives us a $k$-symplectic diffeomorphism from a tubular neigborhood $U$ of $\Delta$ in $M\times M$ to a neighborhood $W$ of the zero section of $T^*M\oplus\stackrel{k}{\cdots}\oplus T^*M$.

    Thus, the diffeomorphisms ``near'' the identity $Id_M$ are thereby put in one to one correspondence with a neighborhood of the zero in the space of sections of $\pi^k_M\colon T^*M\oplus\stackrel{k}{\cdots}\oplus T^*M \to M$ in such  a way that the $k$-symplectic diffeomorphism go onto the subspace of closed sections of $\pi^k_M$, that is, in the subspace of families of $k$ closed  $1$-forms over the same basis point. This gives a coordinate chart for the diffeomorphism group around the identity of $M$ which the $k$-symplectic automorphism group goes onto a linear subspace. Thus the $k$-symplectic automorphism group of $M$  is a manifold modeled on the space of closed sections of $\pi^k_M$. In this discussion we consider the appropriate topologies and differentiable structures on the spaces of diffeomorphism and closed sections. In a future work we want describe with detail the group of automorphism of a $k$-symplectic manifold.
\section{Conclusions and outlook}

   In this paper we have discussed some relevant properties of the $k$-symplectic geometry. First, we have studied some properties of $k$-symplectic vector spaces and their linear subspaces.
Indeed, we have introduced the different kinds of orthogonal complement of a subspaces and extended many of the results in symplectic linear spaces to our case.
These results are later extended for $k$-symplectic manifolds.

   The main result of this paper is the generalization of the Weinstein's normal form theorem for lagrangian submanifolds to this new geometric setting. A direct consequence is a local equivalence theorem for $k$-lagrangian submanifolds. A second consequence is that we can induce a differentiable structure in the automorphism group of a $k$-symplectic manifolds. The considered $k$-symplectic manifolds need an extra geometric ingredient (a polarization); indeed, this theorem is only valid for polarized $k$-symplectic manifolds, that is a $k$-symplectic manifold $M$ of dimension $n(k+1)$ with a $k$-lagrangian integrable $n$-codimensional distribution $W$.

   As is well-known, some classical geometrical structures are determined by their automorphism groups, for instance it was shown by Banyaga \cite{Banyaga-1978, Banyaga-1986, Banyaga-1988} that the geometric stuctures defined by a volume or a symplectic form on a differentiable manifold are determined by their automorphism groups, the groups of volume preserving and symplectic diffeomorphism respectively. The natural question is if we can obtain a similar result in our context, that is, if the geometric structures defined by a $k$-structure are determined by the automorphism group of a $k$-symplectic manifold. We hope to use the normal form theorem for $k$-lagrangian submanifolds of a $k$-symplectic manifold to go further in the study of the group of the automorphism of a $k$-symplectic manifolds. In order to simplify this paper, we leave this study for a future paper.


\end{document}